\newcommand{\tst}{\textstyle}
\newcommand{\m}[1]{\mathcal{#1}}
\newcommand{\bb}[1]{\mathbb{#1}}
\newcommand{\ur}[1]{\mathrm{#1}}
\newcommand{\eps}{\varepsilon}
\newcommand{\oh}{\frac{1}{2}}
\newcommand{\R}{\bb R}
\newcommand{\N}{\bb N}
\newcommand{\wc}{\rightharpoonup}
\renewcommand{\d}{\ur{d}}
\newcommand{\p}{\partial}
\renewcommand{\theta}{{\vartheta}}
\renewcommand{\O}{{\Omega}}
\newcommand{\oO}{\overline{\O}}
\newcommand{\pO}{\p \O}
\newcommand{\si}[1]{{\mathsf{#1}}_i}
\newcommand{\sia}[1]{{\mathsf{#1}}_i^\alpha}
\newcommand{\siw}[1]{{\mathsf{#1}}_i^w}
\newcommand{\siwk}[1]{{\mathsf{#1}}_i^{k,w}}
\newcommand{\hil}[1]{\hat{#1}_i^\ell}
\newcommand{\ika}[1]{{#1}_i^\kappa}
\newcommand{\ila}[1]{{#1}_i^\lambda}
\newcommand{\il}[1]{{#1}_i^\ell}
\newcommand{\ik}[1]{{#1}_i^k}
\newcommand{\iko}[1]{{#1}_i^{k+1}}
\newcommand{\ia}[1]{{#1}_i^\alpha}
\newcommand{\bia}[1]{\bar{#1}_i^\alpha}
\newcommand{\bbia}[1]{\bar{\bar{#1}}_i^\alpha}
\newcommand{\bial}[1]{\bar{#1}_i^{\alpha,\ell}}
\newcommand{\bbial}[1]{\bar{\bar{#1}}_i^{\alpha,\ell}}
\newcommand{\Id}{{\sf Id}}
\renewcommand{\P}{{P_i}}
\newcommand{\bllangle}{{\bigl\langle \hspace{-0.9mm} \bigr\langle}}
\newcommand{\brrangle}{{\bigl\rangle \hspace{-0.9mm} \bigr\rangle}}
\newcommand{\llangle}{{\langle \!\! \langle}}
\newcommand{\rrangle}{{\rangle \!\! \rangle}}
\newtheorem{definition}{Definition}
\newtheorem{theorem}{Theorem}
\newtheorem{corollary}{Corollary}
\newtheorem{lemma}{Lemma}
\newtheorem{assumption}{Assumption}
\newtheorem{algorithm}{Algorithm}
\newtheorem{example}{Example}
\renewcommand{\a}{\alpha}
\newcommand{\abs}[1]{\left|#1\right|}
\newcommand{\tends}{\rightarrow}
\newcommand{\norm}[1]{\left\|#1\right\|}
\newcommand{\half}{\frac{1}{2}}
\newcommand{\ie}{i.e.\ }
\newcommand{\supp}{\mathrm{supp}\,}
\newcommand{\eval}[2]{ #1\vert_{#2}}
\newcommand{\pair}[2]{\langle #1,#2 \rangle}
\newcommand{\RN}{\mathbb{R}^n }
\newcommand{\calT}{\mathcal{T}}
\newcommand{\calI}{\mathcal{I}}
\newcommand{\tia}[1]{\tilde{#1}_i^{\a}}
\newcommand{\ttia}[1]{\tilde{\tilde{#1}}_i^{\a}}
\newcommand{\Eb}{\begin{equation}}
\newcommand{\Ee}{\end{equation}}
\newcommand{\eb}{\begin{equation*}}
\newcommand{\ee}{\end{equation*}}
\begin{document}

\begin{titlepage}
\title{\bf On the Convergence of Finite Element Methods for Hamilton-Jacobi-Bellman Equations}
\author{Max Jensen\footnote{Department of Mathematical Sciences, University of Durham, South Road, Durham DH1 3LE, England, m.p.j.jensen@durham.ac.uk} ,
        Iain Smears\footnote{Mathematical Institute, University of Oxford, 24-29 St.~Giles', Oxford OX1 3LB, England, iain.smears@maths.ox.ac.uk}}
\end{titlepage}

\maketitle

\begin{abstract}
In this note we study the convergence of monotone $P1$ finite element methods on unstructured meshes for fully non-linear Hamilton-Jacobi-Bellman equations arising from stochastic optimal control problems with possibly degenerate, isotropic diffusions. Using elliptic projection operators we treat discretisations which violate the consistency conditions of the framework by Barles and Souganidis. We obtain strong uniform convergence of the numerical solutions and, under non-degeneracy assumptions, strong $L^2$ convergence of the gradients.
\end{abstract}

%%%%%%%%%%%%%%%%%%%%%%%%%%%%%%%%%%%%%%%%%%%%%%%%%%%%%%%%%%%%%%%%%%%%%%%%
\section{Introduction}
%%%%%%%%%%%%%%%%%%%%%%%%%%%%%%%%%%%%%%%%%%%%%%%%%%%%%%%%%%%%%%%%%%%%%%%%

Hamilton-Jacobi-Bellman (HJB) equations, which are of the form
\begin{align} \label{eq:HJBintro}
-\partial_t v + \sup_\alpha (L^\alpha v - d^\alpha) = 0,
\end{align}
where the $L^\alpha$ are linear first- or second order operators and $d^\alpha \in L^2$, characterise the value function of optimal control problems. Indeed, one possibility to introduce the notion of solution of \eqref{eq:HJBintro} is via the underlying optimal control structure. An alternative approach is to use the monotonicity properties of the operator which leads to the concept of viscosity solutions. While these perceptions are essentially equivalent \cite[p.72]{FS06} both views have been instructive for the design and analysis of numerical methods.

The former approach, based on the discretisation of the optimal control problem before employing the Dynamic Programming Principle, has been proposed in the setting of finite elements in \cite{Q81,CF95,CJ09}, see also the review article \cite{K90} and the references therein. Regarding finite difference methods we refer to the book \cite{KD01}. The latter approach, which is also adopted in this note, was firmly established with the contribution \cite{BS91} by Barles and Souganidis in 1991, providing an abstract framework for the convergence to viscosity solutions. Starting with \cite{K97,K00} techniques were developed to quantify the rate of convergence; more recent works are \cite{BJ07,DK07}. A third direction was opened by the method of vanishing moments which neither enforces discrete maximum principles nor makes use of the underlying optimal control structure but relies on a higher order regularisation \cite{FN11}. For a more comprehensive review of the state-of-the-art in the numerical solution of fully non-linear second order equations we refer to \cite{FGN11}.

In the traditional finite element analysis the multiplicative testing with hat functions is viewed as the discrete analogue of the multiplicative testing procedure to define weak solutions of the (variational) differential equation. While elements of this viewpoint are implicitly used in Section VII on gradient convergence, we would like to stress a second interpretation: multiplication with hat functions as regularisation of the residual. Consider for a moment the linear problem $- a(x) \Delta u(x) = f(x)$ with smooth functions $a$ and $u$ as well as a hat function $\phi$ at the node $y^\ell$. Let $P$ be the orthogonal projection onto the approximation space with respect to the scalar product $\langle v, w \rangle = \int \nabla v \cdot \nabla w \, \d x$ (given suitable boundary conditions). If $y$ is near $y^\ell$ then on a fine mesh
\[
- a(y) \Delta u(y) = - \int a(y) \Delta u(y) \, \hat{\phi}(x) \, \d x \approx - a(y^\ell) \int \Delta u(x) \, \hat{\phi}(x) \, \d x =  a(y^\ell) \int \nabla u(x) \cdot \nabla \hat{\phi}(x) \, \d x = a(y^\ell) \int \nabla P u(x) \cdot \nabla \hat{\phi}(x) \, \d x.
\]
since $\hat{\phi} := \phi / \| \phi \|_{L^1(\O)}$ approximates a Dirac Delta as the element size is decreased. In contrast, on general meshes,
\[
- a(y) \Delta u(y) \not\approx a(y^\ell) \int \nabla \m I u(x) \cdot \nabla \hat{\phi}(x) \, \d x, \qquad \text{($\m I$ nodal interpolant)}
\]
even in the limit as the mesh is refined (see Example \ref{ex:inconsistency} below). This indicates that the orthogonality properties of the projection of the exact solution into the approximation space play an important role for the understanding of the (pointwise) consistency of the finite element scheme. Furthermore, this interpretation may serve as a starting point in selecting a discretisation of the HJB operator.

Our analysis combines the following key elements in a single finite element framework:

{\em Treatment of nodally inconsistent discretisations and uniform convergence:} The consistency condition (see \cite[eqn.(2.4)]{BS91} or \cite[p.332]{FS06}) of Barles and Souganidis is based on a limit involving pointwise values of smooth test functions. This condition is not satisfied by finite element methods, even for linear equations. Based on an alternative consistency condition we show the uniform convergence of finite element solutions to the viscosity solution.

{\em Gradient convergence:} We demonstrate how the coercivity of the linear operators under the supremum is recovered by the finite element method in order to control the gradient of the numerical solutions. In a uniformly elliptic setting, this leads  to strong convergence in $L^2([0,T],H^1(\O))$.

{\em Operators of non-negative characteristic form:} The presented analysis includes the treatment of partially and fully deterministic optimal control problems, corresponding to degenerate elliptic operators under the supremum of the Hamiltonian.

{\em Unstructured meshes:} In the spirit of finite element methods the computational domain may be triangulated with an unstructured mesh, allowing to capture complex domains more easily than in a finite difference setting. Typically, weaker conditions on the mesh than quasi-uniformity can be made.

{\em Regularisation with second order operators:} We highlight that the regularisation with second order elliptic operators is sufficient to achieve convergence to the viscosity solution. Indeed, in the example of the method of artificial diffusion, we illustrate how the regularisation in the second order fully non-linear case is of the same kind and order as for first order linear operators.

{\em Unconditional time step:} Our analysis permits explicit, semi-implicit and fully implicit discretisations in time. Fully implicit discretisations in time lead to unconditionally stable schemes.

The structure of the article is as follows: In Section II we introduce a framework of finite element methods. In Section III we study the well-posedness of the discrete systems of equations and describe how these systems are solvable by a known globally convergent, locally superlinearly convergent algorithm. Section IV establishes consistency properties of elliptic projection operators. This enables us to demonstrate in Section V that the upper and lower envelopes of the numerical solutions are sub- and supersolutions. Uniform convergence to the viscosity solution is derived in Section VI and is then built upon to analyse the convergence of the gradient in Section VII. We provide a concrete specimen of a scheme belonging to our framework by describing the method of artificial diffusion in Section VIII.

%%%%%%%%%%%%%%%%%%%%%%%%%%%%%%%%%%%%%%%%%%%%%%%%%%%%%%%%%%%%%%%%%%%%%%%%
\section{Problem statement and definition of the numerical method} \label{sec:problemstatement}
%%%%%%%%%%%%%%%%%%%%%%%%%%%%%%%%%%%%%%%%%%%%%%%%%%%%%%%%%%%%%%%%%%%%%%%%

Let $\O$ be a bounded Lipschitz domain in $\R^d$, $d \ge 2$. Let $A$ be a compact metric space and
\[
A \to C(\oO) \times C(\oO, \R^d) \times C(\oO) \times C(\oO),\; \alpha \to (a^\alpha,  b^\alpha, c^\alpha, d^\alpha)
\]
be continuous, such that the families of functions $\{a^{\a}\}_{\a\in A}$, $\{b^{\a} \}_{\a\in A}$, $\{c^{\a}\}_{\a\in A}$ and $\{d^{\a}\}_{\a\in A}$ are equi-continuous. Consider the bounded linear operators of non-negative characteristic form \cite{Oleinik}
\[
L^\alpha : \; H^1_0(\O) \to H^{-1}(\O), \; w \mapsto - a^\alpha \, \Delta w + b^\alpha \cdot \nabla w + c^\alpha \, w
\]
where $\alpha$ belongs to $A$. Furthermore, suppose that pointwise $d^\alpha \ge 0$. Then
\begin{align} \label{Lbounds}
\sup_{\alpha \in A} \| \, (a^\alpha,  b^\alpha, c^\alpha, d^\alpha) \,
\|_{C(\oO) \times C(\oO, \R^d) \times C(\oO) \times C(\oO)} < \infty, \qquad
\sup_{\alpha \in A} \| L^\alpha \|_{C^2(\oO) \to C(\oO)} < \infty.
\end{align}
We assume that the final-time boundary data \( v_T \in C(\oO) \) is non-negative: \( v_t \geq 0 \) on \(\oO\). For smooth $w$ let 
\[
H w := \sup_\alpha (L^\alpha w - d^\alpha),
\]
where the supremum is applied pointwise. The HJB equation considered is
\begin{subequations}
\begin{alignat}{2}
-\p_t v + H v &=0 	&	&\qquad\text{in }(0,T)\times\O,\\
v&=0 		  	&	&\qquad\text{on }(0,T)\times\pO,\\
v&=v_T		  	&	&\qquad\text{on }\{T\}\times\oO.
\end{alignat}
\end{subequations}
\begin{definition}[\cite{BP87,FS06}]
An upper semi-continuous (respectively lower semi-continuous) function \(v : [0,T] \times \oO \to \R \) is a viscosity subsolution (respectively supersolution) of
\begin{align} \label{HJB}
- \p_t v + H v = 0
\end{align}
on $(0,T) \times \O$ if for any \(w\in C^{\infty}(\R \times\R^d)\) such that \(v-w\) has a strict local maximum (respectively minimum) at \((t,x)\in(0,T)\times\O\) with \(v(t,x)=w(t,x)\), gives $-\p_t w(t,x)+Hw(t,x) \leq 0$, (respectively greater than or equal to \(0\)). If \(v\in C(\R\times\R^d)\) of equation \eqref{HJB} is a viscosity subsolution and supersolution, then \(v\) is called a viscosity solution.
\end{definition}

Let $V_i$ be a sequence of piecewise linear shape-regular finite element spaces with nodes $\il y$ and associated hat functions $\il \phi$. Here $\ell$ is the index ranging over the nodes of the finite element mesh. Let $V_i^0$ be the subspace of functions which satisfy homogeneous Dirichlet conditions. It is convenient to assume that \(\il y \in \O\) for $\ell \leq N := \dim V_i^0$; i.e.~the index $\ell$ first ranges over internal and then over external nodes. Set $\hil \phi := \il \phi / \| \il \phi \|_{L^1(\O)}$. The mesh size, i.e. the largest diameter of an element, is denoted $(\Delta x)_i$. It is assumed that $(\Delta x)_i \to 0$ as $i \to \infty$.

Let $h_i$ be the (uniform) time step used in conjunction with $V_i$, with $\frac{T}{h_i} \in \N$, and let $\ik s$ be the $k$th time step at the refinement level~$i$. The set of time steps is
\[
S_i := \bigl\{ \ik s : k = 0, \ldots, {\textstyle \frac{T}{h_i}} \bigr\}.
\]
Let the $\ell$th entry of $d_i w( \ik s, \cdot)$ be
\[
(d_i w( \ik s, \cdot))_\ell = \frac{w( \iko s, \il y) - w(\ik s, \il y)}{h_i}.
\]
For each $\alpha$ and $i$ find an approximate splitting $L^\alpha \approx \ia E + \ia I$ into linear operators
\begin{align*}
\ia E &: \; H^1_0(\O) \to H^{-1}(\O), \; w \mapsto - \bia a \, \Delta w + \bia b \cdot \nabla w + \bia c \, w,\\
\ia I &: \; H^1_0(\O) \to H^{-1}(\O), \; w \mapsto - \bbia a \, \Delta w + \bbia b \cdot \nabla w + \bbia c \, w,
\end{align*}
with continuous
\begin{align} \label{conta} \begin{array}{ll}
A \to C(\oO) \times C(\oO, \R^d) \times C(\oO),&\; \alpha \to (\bia a, \bia b, \bia c),\\[1mm]
A \to C(\oO) \times C(\oO, \R^d) \times C(\oO),&\; \alpha \to (\bbia a, \bbia b, \bbia c)
\end{array} \end{align}
such that \(\bia c\) and \(\bbia c\) are non-negative and for some $\gamma \in \R$ and all $\alpha \in A$,
\begin{align}\label{react}
\gamma \ge \| \bia c \|_{L^\infty} + \| \bbia c \|_{L^\infty}.
\end{align}
Also find for each $i$ a non-negative $\ia d$ which approximates $d^\alpha$: $\ia d \approx d^\alpha$. These consistency conditions $L^\alpha \approx \ia E + \ia I$ and $d^\alpha \approx \ia d$ are made precise as follows:

\begin{assumption} \label{consistency}
For all sequences of nodes $(\il y)_{i \in \N}$, where in general $\ell = \ell(i)$ depends on $i$:
\[
\lim_{i \to \infty} \sup_{\a\in A} \bigl\| a^\alpha - \bigl( \bia a(\il y) + \bbia a(\il y) \bigr) \bigr\|_{L^\infty({\rm supp} \, \hil \phi)} + \bigl\| b^\alpha - \bigl(
\bia b + \bbia b \bigr) \bigr\|_{L^\infty(\oO,\R^d)} + \bigl\| c^\alpha - \bigl( \bia c + \bbia c \bigr) \bigr\|_{L^\infty(\oO)} + \bigl\| d^\alpha - \ia d
\bigr\|_{L^\infty(\oO)} = 0.
\]
\end{assumption}

Define, for $w \in H^1(\O)$, \(\ell \in \{ 1, \dots, N = \dim V_i^0 \} \),
\begin{subequations}\label{eq:discreteop}
\begin{align}
(\sia E w)_\ell & := \bia  a(\il y) \langle \nabla w, \nabla \hil \phi \rangle + \langle \bia  b \cdot \nabla w + \bia  c \, w, \hil \phi \rangle,\\
(\sia I w)_{\ell} & := \bbia a(\il y) \langle \nabla w, \nabla \hil \phi \rangle + \langle \bbia b \cdot \nabla w + \bbia c w, \hil \phi \rangle,\\
(\sia C)_\ell & := \langle \ia d, \hil \phi \rangle.
\end{align}
\end{subequations}
On the restriction to $V_i$ we identify the $\sia E w$ and $\sia I w$ with their matrix representations with respect to the nodal basis \(\left\{\il \phi\right\}_{\ell}\). Similarly the nodal evaluation operator corresponds then to the identity matrix $\Id$.

\begin{definition}
An operator \( F : V \to \R^N \) is said to satisfy the Local Monotonicity Property (LMP) property if for all \(v \in V_i\) such that \(v\) has a non-positive minimum at the internal node \(\il y\), \(\ell\in \{1, \dots, N \}\), we have $( F v )_{\ell} \leq 0$. The operator $F$ satisfies the weak Discrete Maximum Principle (wDMP) provided that: 
\Eb\label{eq:wDMP}
\text{if \,}\bigl( F w \bigr)_{\ell} \geq 0 \text{ for all } \ell \in \{1,\dots, N\}, \quad\text{ then } \min_{\O} w \geq \min \{ \min_{\pO} w, 0 \}.
\Ee
\end{definition}

More explicit alternative formulations of the wDMP are discussed, for example, in \cite{BE02} and \cite{BE05}. Note that also $\Id$ and $\mathsf 0$ satisfy this LMP property. It is clear that if $F$ satisfies the LMP and \(v \in V_i\) has a {\em negative} minimum at the internal node \(\il y\) then $\bigl( (F + \eps \, \Id) v \bigr)_{\ell} < 0$ for all $\eps > 0$. This implies for all $\eps > 0$ that $F + \eps \, \Id$ satisfies the wDMP.

\begin{assumption} \label{monotonicity}
Assume for each $\alpha \in A$ that\, $\sia E$ restricted to $V_i$ has non-positive off-diagonal entries. Let $h_i$ be small enough so that all $h_i \sia E - \Id$ are monotone, i.e.~so that all entries of all $h_i \sia E - \Id$ are non-positive. Assume that for each \(\a\in A\) that \(\; \sia I\) satisfies the LMP property.
\end{assumption}

Obtain the numerical solution $v_i(T, \cdot) \in V_i$ by interpolation of $v_T$. Then $v_i(\ik s, \cdot) \in V_i^0$ at time $\ik s$ is defined, inductively, by
\begin{align} \label{num}
- d_i v_i(\ik s,\cdot) + \sup_\alpha \bigl( \sia E v_i(\iko s,\cdot) + \sia I v_i(\ik s,\cdot) - \sia C \bigr) = 0.
\end{align}
If all $\sia I$ are $\mathsf 0$ then \eqref{num} is an explicit scheme, otherwise implicit. Notice that the monotonicity assumption on $h_i \sia E - \Id$ is a time step restriction if $\sia E$ has positive diagonal entries.

%%%%%%%%%%%%%%%%%%%%%%%%%%%%%%%%%%%%%%%%%%%%%%%%%%%%%%%%%%%%%%%%%%%%%%%%
\section{Well-posedness of the discrete HJB equations}
%%%%%%%%%%%%%%%%%%%%%%%%%%%%%%%%%%%%%%%%%%%%%%%%%%%%%%%%%%%%%%%%%%%%%%%%

Let $\alpha_i^{\ell,k}(w)$ be a control $\alpha$ which maximises
\begin{align} \label{eq:max_alpha}
\sup_\alpha \left( \sia E w(\iko s,\cdot) + \sia I w(\ik s,\cdot) - \sia C \right)_{\ell} .
\end{align}
Let $\siwk I$ and $\siwk E$ be the matrices whose $\ell$th row is equal to that of
\[
\si I^{\alpha_i^{\ell,k}(w)} \quad \mbox{and} \quad \si E^{\alpha_i^{\ell,k}(w)},
\]
respectively. Also let the $\ell$th entry of $\siwk C$ be
\[
\si C^{\alpha_i^{\ell,k}(w)}.
\]
Thus, informally speaking, the $\siwk E$, $\siwk I$ and $\siwk C$ are gained by `reshuffling' the rows of the $\sia E$, $\sia I$ and $\sia C$, respectively.  Notice that the maximising control in \eqref{eq:max_alpha} may be non-unique.

Where no ambiguity can arise we simply write $\siw I$, $\siw E$ and $\siw C$ without explicitly referring to $k$. We will make use of the partial ordering of \(\RN\):
\eb
\text{for }x,\,y\in \RN;\quad x\geq y \text{ if and only if } x_{\ell} \geq y_{\ell},\quad \forall \ell\in\left\{1,\dots,n\right\}.
\ee
For a collection \(\left\{x^{\a}\right\}_{\a\in A}\subset\RN\), we define the operator \(\sup_{\a \in A}\) componentwise: $\bigl(\sup_{\a\in A} x^{\a}\bigr)_{\ell}=\sup_{\a\in A}x^{\a}_{\ell}$. The following lemma shows that in the linear case the wDMP turns, for functions which vanish on the boundary, into an M-matrix property.
 
\begin{lemma}\label{lem:monotonicity}
The matrices $h_i \siwk E - \Id$ are monotone. The matrices of\, $h_i \siwk I + \Id$ restricted to \(V_i^0\) are invertible diagonally dominant M-matrices for all \(w \in C([0,T]\times\oO)\).  The operators $\siwk I$ and $h_i \siwk I + \Id$ satisfy the LMP and wDMP, respectively.
\end{lemma}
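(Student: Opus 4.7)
The plan is to reduce all three assertions to the observation that, for each internal index $\ell$, the $\ell$-th row of $\siwk E$ (respectively $\siwk I$) coincides by construction with the $\ell$-th row of $\si E^{\alpha_i^{\ell,k}(w)}$ (respectively $\si I^{\alpha_i^{\ell,k}(w)}$). Hence any property that holds row-wise uniformly in $\a$ for $\sia E$ or $\sia I$ is inherited by the reshuffled matrices.

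The monotonicity of $h_i \siwk E - \Id$ is then essentially immediate: by Assumption \ref{monotonicity} every entry of $h_i \sia E - \Id$ is non-positive, so the same is true row by row of $h_i \siwk E - \Id$. The LMP for $\siwk I$ follows analogously: if $v \in V_i$ has a non-positive minimum at the internal node $\il y$, then
\[
(\siwk I \, v)_\ell \;=\; \bigl(\si I^{\alpha_i^{\ell,k}(w)} v\bigr)_\ell \;\le\; 0
\]
by the LMP assumed for $\sia I$.

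The next step is to translate the LMP for $\siwk I$ into sign and magnitude conditions on its matrix entries. Inserting the nodal basis vector $e^m \in V_i$ for $m \neq \ell$, which is non-negative with minimum $0$ attained at $\il y$, yields $(\siwk I)_{\ell m} \le 0$; inserting the constant vector $-\mathbb{1} \in V_i$, with minimum $-1$ attained at $\il y$, yields $\sum_m (\siwk I)_{\ell m} \ge 0$, and combined with the non-positivity of the off-diagonals this gives $(\siwk I)_{\ell\ell} \ge \sum_{m \neq \ell} \smabs{(\siwk I)_{\ell m}}$, the sum extending over \emph{all} nodes. Restricting to the $N \times N$ block indexed by $V_i^0$, the matrix $h_i \siwk I + \Id$ has non-positive off-diagonals, and its $\ell$-th row satisfies
\[
\bigl(1 + h_i (\siwk I)_{\ell\ell}\bigr) \;-\; h_i \!\!\!\sum_{\substack{m \neq \ell\\ m \,\text{internal}}}\!\! \smabs{(\siwk I)_{\ell m}} \;\ge\; 1 + h_i \!\!\!\sum_{m \,\text{boundary}}\!\! \smabs{(\siwk I)_{\ell m}} \;\ge\; 1,
\]
so it is strictly diagonally dominant with positive diagonal. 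The standard characterisation then identifies $h_i \siwk I + \Id$ restricted to $V_i^0$ as an invertible M-matrix.

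Finally, the wDMP is delivered by the remark preceding the lemma: since multiplication by the positive scalar $h_i$ preserves the LMP, applying that remark with $F = h_i \siwk I$ and $\eps = 1$ shows that $h_i \siwk I + \Id$ satisfies the wDMP. The main bookkeeping point to watch is the contribution of the boundary columns to the row sums of $\siwk I$, since it is precisely these contributions which provide the uniform strict slack needed to upgrade the weak diagonal dominance coming from LMP into the strict diagonal dominance required for invertibility of the $V_i^0$-block.
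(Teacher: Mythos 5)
Your argument is essentially the same as the paper's: both proofs reduce all three assertions to the observation that the reshuffled matrices inherit row-wise properties from the $\sia I$ and $\sia E$, both extract the sign of the off-diagonal entries by testing the LMP against hat functions $\phi_i^m$ with $m\neq\ell$, both obtain weak diagonal dominance by testing against a negative constant-valued function, and both then appeal to the $\Id$ and the M-matrix characterisation. The only cosmetic difference is your choice of test function $-\mathbb{1}=-\sum_{\text{all }m}\phi_i^m$ versus the paper's $-\sum_{\ell=1}^N \il\phi$; yours yields a row-sum bound over all columns (including boundary ones), which is then weakened by discarding the non-positive boundary contributions, whereas the paper's test function gives the interior row-sum bound directly.

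The one thing to correct is the closing sentence: it is not the boundary columns that ``provide the uniform strict slack.'' Your own chain of inequalities makes this plain — after restricting to the $V_i^0$ block the row-sum defect is bounded below by $1 + h_i\sum_{m\,\text{boundary}}\smabs{(\siwk I)_{\ell m}} \geq 1$, and the strict lower bound of $1$ comes entirely from the $\Id$ term, independently of whether any boundary columns contribute. If the row has no boundary entries at all (an interior node well away from $\pO$), the argument still goes through because of the identity. The boundary columns merely add non-negative extra slack on top; they are not what upgrades weak to strict dominance. The paper's phrasing, which establishes weak diagonal dominance of $\sia I$ on $V_i^0$ and then observes that $h_i\siwk I + \Id$ is strictly diagonally dominant, makes the role of the $\Id$ the explicit source of strictness.
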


\begin{proof}
Monotonicity of $h_i \siwk E - \Id$ is a straightforward consequence of the non-positivity of the entries of $h_i \sia E - \Id$ for all \(\a\in A\). The LMP property of \(\sia I\) for the node \(\il y\) only imposes a condition on the \(\ell\)-th row of the matrix of \(\sia I\). Hence it is easily checked that the $\siwk I$ and the \( h_i \siwk I +\Id\), which are composed row-wise from the $\sia I$ and \(h_i \sia I + \Id\), satisfy the LMP and wDMP respectively when all \(\sia I\) satisfy the LMP property.

The LMP property also implies that the matrix representations of the \( \sia I\) restricted to \(V_i^0\) are weakly diagonally dominant for all \(\a\in A\). This is because taking \(v = - \sum_{\ell=1}^N \il \phi\) yields
\eb
0 \geq \left(\sia I v\right)_{\ell}=-\left(\sia I \right)_{\ell \ell}-\sum_{j \neq \ell}^N \left(\sia I \right)_{\ell j},
\ee
using the fact that $v$ attains a non-positive minimum at each internal node.  For $j \neq \ell$ the hat function $\phi_i^j$ attains a non-positive minimum at $\il y$, giving \(\left(\sia I \right)_{\ell j}\leq 0\). This yields
\eb
\left(\sia I \right)_{\ell \ell} - \sum_{j\neq \ell}^N \abs{\left(\sia I \right)_{\ell j}} \geq 0.
\ee
Therefore \(h_i \siwk I + \Id\) restricted to \(V_i^0\) is strictly diagonally dominant and thus invertible. Furthermore, since $(h_i \siwk I + \Id ) + \eps \, \Id$ is similarly invertible for all \(\eps \geq 0\) and all off-diagonal entries are non-positive, \cite[p.\ 114]{HJ91} shows that \(h_i \siwk I + \Id\) restricted to \(V_i^0\) is represented by an invertible M-matrix.
\end{proof}

\begin{corollary}
The non-linear operators $w \mapsto \siwk I w$ and \( w \mapsto (h_i \siwk I + \Id)w \) satisfy the LMP and wDMP, respectively. Moreover, \( w \mapsto -(h_i \siwk E - \Id) w \) is positive: if $w \ge 0$ then $-(h_i \siwk E - \Id) w \ge 0$.
\end{corollary}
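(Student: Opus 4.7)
The plan is to reduce every assertion in the corollary to the linear statements already established in Lemma \ref{lem:monotonicity}, exploiting the fact that although $\siwk I$ and $\siwk E$ depend non-linearly on $w$, this dependence enters only through the row-wise choice of the maximising control $\alpha_i^{\ell,k}(w)$. For any fixed $w$, the $\ell$th row of $\siwk I$ coincides with the $\ell$th row of $\sia I$ with $\a = \alpha_i^{\ell,k}(w)$, so each output component is actually produced by one linear operator drawn from the family $\{\sia I\}_{\a \in A}$.

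To establish the LMP for $w \mapsto \siwk I w$, I would fix $w \in V_i$ having a non-positive minimum at the internal node $\il y$. By the row-wise definition,
\[
(\siwk I w)_\ell = (\si I^{\alpha_i^{\ell,k}(w)} w)_\ell,
\]
and by Assumption \ref{monotonicity} the linear operator $\si I^{\alpha_i^{\ell,k}(w)}$ satisfies the LMP property, which forces the right-hand side to be non-positive. For the wDMP of $w \mapsto (h_i \siwk I + \Id) w$, I would first note that the LMP is preserved under multiplication by the positive scalar $h_i$, so $h_i \siwk I$ also satisfies the LMP. Then I invoke the observation recorded just before Assumption \ref{monotonicity}: if a (possibly non-linear) operator $F$ satisfies the LMP and $v$ has a strictly negative minimum at $\il y$, then $((F+\Id)v)_\ell < 0$; the standard contrapositive argument---assume $\min_\O w < \min\{\min_{\pO} w,0\}$, locate the minimum at an internal node where $w$ is strictly negative, obtain a strict inequality, and contradict the hypothesis $((F+\Id)w)_\ell \ge 0$---upgrades this to the wDMP for $F + \Id$. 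Applied with $F = h_i \siwk I$, this yields the wDMP for $h_i \siwk I + \Id$.

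For the positivity statement, Lemma \ref{lem:monotonicity} provides that all entries of $h_i \siwk E - \Id$ are non-positive (which is the paper's notion of monotonicity), since each row comes from some $h_i \sia E - \Id$ which has that property by Assumption \ref{monotonicity}. Hence every entry of $-(h_i \siwk E - \Id)$ is non-negative, and for any $w \ge 0$ componentwise, each entry $(-(h_i \siwk E - \Id)w)_\ell$ is a non-negative linear combination of non-negative numbers, proving $-(h_i \siwk E - \Id) w \ge 0$.

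I do not anticipate any genuine obstacle: the entire content of the proof is the observation that the non-linearity lives solely in the row-selection, so that each scalar inequality the corollary demands is precisely one of the inequalities already proved in Lemma \ref{lem:monotonicity} for the linear operator whose row was selected.
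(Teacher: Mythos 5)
Your proof is correct and follows the route the paper implicitly intends: the corollary is a direct specialisation of Lemma~\ref{lem:monotonicity} to the case where the test function $v$ in the LMP/wDMP definitions is taken to be the same $w$ that selects the maximising controls $\alpha_i^{\ell,k}(w)$, and your row-wise reduction to the individual $\sia I$ and $h_i\sia E-\Id$ plus the scalar-shift observation for wDMP is exactly how that lemma is proved. In fact the shortest version would simply observe that, with $v=w$, the lemma's statements about the fixed matrices $\siwk I$, $h_i\siwk I+\Id$, and $h_i\siwk E-\Id$ already give the corollary verbatim; your more explicit unpacking is a sound and equivalent presentation of the same argument.
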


We record a constructive proof of existence of a solution \(v_i \in S_i\times V_i^0\) to \eqref{num} for all \(k \in \left\{0,1,2,\dots,T/h_i-1\right\}\) with the below Algorithm~\ref{alg}. This algorithm, which can be traced back to \cite{Howard60}, is found in the continuous setting in \cite{LM80} which provides the proof of convergence and existence of solutions. In \cite{BMZ09} it is shown that in the discrete setting it is a semi-smooth Newton method that converges superlinearly.

The algorithm to solve the non-linear problem \eqref{num} at a given time level is the following.

\begin{algorithm}\label{alg}
Given \(k \in \N \) and \(v_i(s_i^{k+1},\cdot) \in V_i^0\), choose an arbitrary \(\a \in A\) and find \(w_0 \in V_i^0\) such that
\eb
\left(h_i \sia I + \Id\right) w_0 = h_i \sia C- \left(h_i \sia E - \Id\right)v_i(s_i^{k+1},\cdot).
\ee
For \(m\in \left\{0,1,2,\dots\right\}\), inductively find \(w_{m+1} \in V_i^0\) such that
\Eb\label{algeq}
\left(h_i \mathsf{I}^{w_m}_i+\Id\right)w_{m+1}= h_i \mathsf{C}^{w_m}_i- \left(h_i \mathsf{E}^{w_m}_i-\Id\right)v_i(s_i^{k+1},\cdot).
\Ee
\end{algorithm}

\begin{theorem}\label{thm:discretewellposedness}
The numerical solution \(v_i\) exists, is unique, solves the linear systems
\Eb \label{numsol}
(h_i \si I^{k,v_i} + \Id) v_i(\ik s,\cdot) = - (h_i \si E^{k,v_i} - \Id) v_i(\iko s,\cdot) + h_i \si C^{v_i} \quad \tst \forall k \in \bigl\{0,1,2,\dots,\frac{T}{h_i}-1 \bigr\};
\Ee
and is non-negative. Given \(k\in\left\{0,1,2,\dots\right\}\) and \(v_i(s_i^{k+1},\cdot) \in V_i^0\) , the iterates of Algorithm~\ref{alg} converge superlinearly to the unique solution \(v_i(s_i^k,\cdot)\) of \eqref{num}: \(\lim_m w_m = v_i(\ik s,\cdot)\). Any numerical solution $\ia v$ of the linear evolution problem associated to a fixed $\alpha$ with homogeneous Dirichlet conditions, that is
\begin{align} \label{numlinearsol} \begin{array}{c}
(h_i \sia I + \Id) \ia v(\ik s,\cdot) = - (h_i \sia E - \Id) \ia v(\iko s,\cdot) + h_i \sia C, k \in \{1, 2, 3, \ldots \}, \quad \mbox{with} \quad
\ia v(T,\cdot) = v_i(T,\cdot),\\[1mm] v_i(s_i^k,\cdot) \in V_i^0\quad \text{for all } \tst k\in\bigl\{0,1,\dots,\frac{T}{h_i}-1\bigr\};
\end{array} \end{align}
is an upper bound: $v_i \le \ia v$ on \(S_i\times\oO\).
\end{theorem}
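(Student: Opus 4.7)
The plan is to proceed by backward induction in $k$, starting from $k=T/h_i$ with the given terminal data $v_i(T,\cdot)\in V_i$, simultaneously establishing at each time level the existence, uniqueness, the linear representation \eqref{numsol}, non-negativity, and the upper bound $v_i\le\ia v$. The base case is immediate: interpolating the non-negative $v_T$ gives $v_i(T,\cdot)\ge 0$, and $v_i(T,\cdot)=\ia v(T,\cdot)$ by construction.

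For the inductive step I would deduce existence from Algorithm~\ref{alg}. Lemma~\ref{lem:monotonicity} ensures that every matrix $h_i\,\si I^{w_m}+\Id$ restricted to $V_i^0$ is an invertible M-matrix, so the iterates $w_m\in V_i^0$ of \eqref{algeq} are well-defined, and the semi-smooth Newton analysis of \cite{BMZ09} then supplies superlinear convergence of $(w_m)$ to a fixed point, which by construction solves \eqref{num}. For uniqueness and the passage to \eqref{numsol}, I would take any two solutions $v_1,v_2\in V_i^0$ of \eqref{num} sharing the same data $v_i(\iko s,\cdot)$, freeze a row-by-row maximiser $\alpha_i^{\ell,k}(v_1)$ for $v_1$, so that $v_1$ satisfies \eqref{numsol} with operators $\si I^{k,v_1},\si E^{k,v_1},\si C^{v_1}$, while $v_2$ satisfies the same linear system with $\le$ in place of $=$, since the supremum dominates any single $\alpha$. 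Subtracting yields $(h_i\,\si I^{k,v_1}+\Id)(v_2-v_1)\le 0$ componentwise at internal nodes with vanishing boundary values, and the M-matrix / wDMP structure of Lemma~\ref{lem:monotonicity} then forces $v_2\le v_1$; swapping the roles gives equality. Applying the same freezing step to the maximiser of $v_i$ itself produces \eqref{numsol}.

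Non-negativity of $v_i(\ik s,\cdot)$ is then read off from \eqref{numsol}: the right-hand side equals $-(h_i\,\si E^{k,v_i}-\Id)v_i(\iko s,\cdot)+h_i\,\si C^{v_i}$, in which the first term is componentwise non-negative because $-(h_i\,\si E^{k,v_i}-\Id)$ has non-negative entries (Lemma~\ref{lem:monotonicity}) and $v_i(\iko s,\cdot)\ge 0$ by induction, while the second is non-negative since $\ia d\ge 0$ and $\hil\phi\ge 0$. Invoking the wDMP of $h_i\,\si I^{k,v_i}+\Id$ together with the homogeneous boundary values of $v_i(\ik s,\cdot)$ then gives $v_i(\ik s,\cdot)\ge 0$. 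The upper bound is obtained by an analogous comparison. Setting $u:=v_i-\ia v$, the base case $u(T,\cdot)=0$ holds by construction; assuming inductively $u(\iko s,\cdot)\le 0$, I would use that the maximiser in \eqref{num} dominates any single $\alpha$ to obtain the componentwise inequality
\begin{equation*}
(h_i\sia I+\Id)v_i(\ik s,\cdot)\le -(h_i\sia E-\Id)v_i(\iko s,\cdot)+h_i\sia C
\end{equation*}
at internal nodes. Subtracting \eqref{numlinearsol} gives $(h_i\sia I+\Id)u(\ik s,\cdot)\le -(h_i\sia E-\Id)u(\iko s,\cdot)\le 0$, where the final inequality follows from the induction hypothesis and the non-negativity of the entries of $-(h_i\sia E-\Id)$. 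Since $u(\ik s,\cdot)$ vanishes on $\pO$, a final application of the wDMP closes the induction.

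The main obstacle is the careful ``freezing of the control'' at each pointwise maximiser: once this device is in place, every nonlinear claim of the theorem (uniqueness, the linear representation \eqref{numsol}, non-negativity, and the comparison with $\ia v$) reduces to an M-matrix / wDMP argument supplied by Lemma~\ref{lem:monotonicity}. The only external ingredient is the convergence theorem of \cite{BMZ09}, which underpins existence and the superlinear convergence of Algorithm~\ref{alg}.
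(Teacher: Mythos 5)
Your proposal is correct and follows essentially the same backward-induction structure as the paper: cite \cite{BMZ09} for the semi-smooth Newton convergence, then deduce non-negativity and the bound $v_i\le\ia v$ by the ``freeze the maximiser, subtract, and invoke inverse positivity of $h_i \si I^{k,v_i}+\Id$'' pattern. The one genuine difference is that you supply a self-contained uniqueness argument (two solutions, freeze the maximiser of one, obtain a one-sided inequality, swap roles), whereas the paper simply attributes uniqueness to \cite[Theorem 2.1]{BMZ09}; your version is more elementary and makes explicit that passing from \eqref{num} to the linearised form \eqref{numsol} is exactly the same freezing step, which the paper leaves implicit. Both routes rest on the same ingredient --- the M-matrix / wDMP property of $h_i \si I^{k,w}+\Id$ from Lemma~\ref{lem:monotonicity} --- so there is no substantive divergence.
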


\begin{proof} \cite[Theorem 2.1]{BMZ09} shows existence and uniqueness of a solution \(v_i(s_i^k,\cdot)\) given \(k\) and \(v_i(s_i^{k+1},\cdot)\) and superlinear convergence of the algorithm: their Assumption (H1) is ensured by Lemma \ref{lem:monotonicity} and their Assumption (H2) is guaranteed by equation \eqref{conta}. Existence of a solution \(v_i\) is then obtained by induction over \(k\).

Also \(v_i \geq 0\) on \(S_i\times\oO\) follows from induction over \(k\). By assumption \(v_i(T,\cdot) \geq 0 \) on \(\oO\). Since all entries of \( h_i \si{E}^{v_i} - \Id \) are non-positive, all entries of \( \si{C}^{v_i}\) are non-negative, and \(v_i(s_i^{k+1},\cdot)\geq0\), \eqref{numsol} shows
\begin{align*}
(h_i \si{I}^{v_i} + \Id) v_i(\ik s,\cdot) & = - (h_i \si{E}^{v_i} - \Id) v_i(\iko s,\cdot) + h_i \si C^{v_i} \geq 0.
\end{align*}
Hence by inverse positivity of \( h_i \si{I}^{v_i} + \Id\), we deduce that \( v_i(s_i^k,\cdot)\geq 0\) on \(\oO\).

Finally, we prove that \(v_i \leq v_i^{\a}\) for all \(\a \in A\). Fix \(\a \in A\). Firstly, \(v_i(T,\cdot)=v_i^{\a}(T,\cdot)\). For given \( k \in \N \) assume that \(v_i(s_i^{k+1},\cdot) \leq v_i^{\a}(s_i^{k+1},\cdot)\). Then \eqref{num} implies
\eb
\left(h_i \sia I+\Id\right) v_i(s_i^k,\cdot) \leq h_i \sia C -  \left(h_i\sia E - \Id\right)v_i(s_i^{k+1},\cdot).
\ee
Subtracting \eqref{numlinearsol} from the above inequality and using monotonicity of \( h_i \sia E-\Id\) yields
\begin{align*}
\left(h_i \sia I+\Id\right)\left(v_i(s_i^k,\cdot)-v_i^{\a}(s_i^k,\cdot)\right) &\leq \left(h_i \sia E - \Id\right)\left( v_i^{\a}(s_i^{k+1},\cdot)-v_i(s_i^{k+1},\cdot)\right) \leq 0.
\end{align*}
Thus by inverse positivity of \( h_i \sia I + \Id \) we conclude that \( v_i(s_i^{k},\cdot) \leq v_i^{\a}(s_i^{k},\cdot)\) on \(\O\), which completes the induction.
\end{proof}

%%%%%%%%%%%%%%%%%%%%%%%%%%%%%%%%%%%%%%%%%%%%%%%%%%%%%%%%%%%%%%%%%%%%%%%%
\section{Consistency properties of elliptic projections}
%%%%%%%%%%%%%%%%%%%%%%%%%%%%%%%%%%%%%%%%%%%%%%%%%%%%%%%%%%%%%%%%%%%%%%%%

The Barles-Souganidis argument requires the existence of a projection operator onto the discrete function space that satisfies two properties. First, the projections of a smooth function must be convergent in a sufficiently strong sense, for example in $W^{1,\infty}$. Second, the discretisations of the partial differential operators must be pointwise consistent when applied to the projections of a smooth function, \ie the values of the operators applied to the projections converge to the values of the continuous operator applied to the smooth function. In the context of classical finite difference methods, the interpolant to the grid satisfies these properties trivially because the operators are designed to be consistent with respect to interpolation. However, in the case of FEM, the nodal interpolant may fail to satisfy the consistency condition, even for reasonable meshes. We illustrate this behaviour in Example~\ref{ex:inconsistency}.

\begin{figure}[t]
\begin{center}
 \subfloat[Consistent mesh]{
	  \begin{tikzpicture}[scale=1.5,auto]
	  \fill[black] (0,0) circle (0.05) node[above left] {$x$};
	  \draw (-1,0) node[left] {$y^4_i$} --  (1,0) node[right] {$y_i^2$};
	  \draw (0,-1) node[below] {$y_i^5$} --(0,1) node[above] {$y_i^3$} ;
	  \draw (1,0)  -- (1,1);
	  \draw (1,1)  --  (0,1) ;
	  \draw (0,1) --  (-1,0);
	  \draw (-1,0)  --  (-1,-1);
	  \draw (-1,-1) --  (0,-1);
	  \draw (-1,-1) -- (1,1);
	  \draw (0,-1)  -- (1,0);
	  \end{tikzpicture}\label{fig:consmesh}
	  }
\hspace{2cm}
  \subfloat[Inconsistent mesh]{
	  \begin{tikzpicture}[scale=1.5,auto]
	  \fill[black] (0,0) circle (0.05) node[above left] {$x$};
	  \node at (0.25,0.25) {$T_1$};
	  \draw (-1,0) node[left] {$y_i^4$} --  (1,0) node[right] {$y_i^2$};
	  \draw (0,-1) node[below] {$y_i^5$} --(0,1) node[above] {$y_i^3$} ;
	  \draw (1,0)--(0,1);
	  \draw (0,1)--(-1,0);
	  \draw (-1,0) -- (0,-1);
	  \draw (0,-1)--(1,0);
	  \end{tikzpicture}\label{fig:inconsmesh}
	  }
\end{center}
\caption{(a) illustrates a mesh that leads to a FEM discretisation of the Laplacian that is pointwise consistent with respect to the interpolant. This is no longer the case for the mesh depicted by (b). In (b), \(T_1\) denotes the upper-right element.}
\label{fig:meshes}
\end{figure}
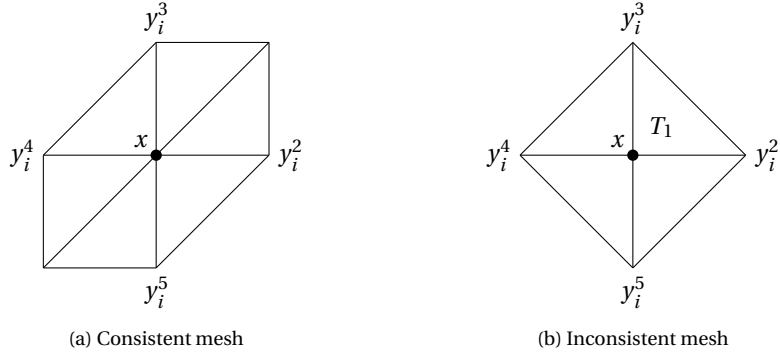

\begin{example}\label{ex:inconsistency}
For a fixed point \(x\) in a domain, consider two sequences of meshes, such that the elements neighbouring \(x\) are as depicted in Figure \ref{fig:meshes}. Denote \(\hat{\phi}_i\) and \(\hat{\varphi}_i\) the \(L^1\)-normalised hat functions associated with the node \(x\) for the meshes depicted respectively by (a) and (b). Let \(w\) be a smooth function; let \(\calI_a w\) and \(\calI_b w\) be the nodal interpolants of \(w\) respectively on the two meshes. We show that the mesh type of (a) leads to a FEM discretisation of the Laplacian that is strongly consistent with respect to interpolation, whereas the mesh type of (b) does not.

For the mesh of Figure \ref{fig:meshes}(a), it is well known that the FEM discretisation of the Laplacian coincides with a finite difference discretisation and that
\begin{align*}
\pair{\nabla \calI_a w}{\nabla \hat{\phi}_i} &=\frac{1}{(\Delta x)_i^2}\left(4w(x)-w(y^2_i)-w(y^3_i)-w(y^4_i)-w(y^5_i)\right) = - \Delta w(x)+ O((\Delta x)_i^2).
\end{align*}
For the mesh of Figure \ref{fig:meshes}(b), we sketch the calculation: first we have
\eb
\norm{\varphi_i}=\frac{2}{3} (\Delta x)_i^2;
\quad \eval{\nabla \hat{\varphi}_i}{T_1}=\frac{3}{2 (\Delta x)_i^3} \begin{pmatrix} -1 \\ -1 \end{pmatrix};
\quad \eval{\nabla \calI_b w}{T_1}=\frac{1}{(\Delta x)_i} \begin{pmatrix} w(y^3_i)-w(x)\\ w(y^2_i)-w(x) \end{pmatrix};
\ee
thus
\eb
\int\limits_{T_1} \nabla \calI_b w \cdot \nabla \hat{\varphi}_i \, \d x = \frac{3}{4 (\Delta x)_i^2} \left( 2w(x)- w(y^3_i) - w(y^2_i) \right).
\ee
Doing a similar calculation for the other elements shows that
\begin{align*}
 \pair{\nabla \calI_b w}{\nabla \hat{\varphi}_i}
& = \frac{3}{2 (\Delta x)_i^2} \left(4w(x)-w(y^2_i)-w(y^3_i)-w(y^4_i)-w(y^5_i)\right)
 = - \frac{3}{2} \Delta w(x) + O( (\Delta x)_i^2).
\end{align*}
\end{example}

We overcome this difficulty by using a different projection operator in the Barles-Souganidis argument. Given $w \in C([0,T], H^1(\O))$, denote by $\P w$ a linear mapping into $[0,T] \times V_i$ which satisfies for all $\hil \phi \in V_i^0$
\begin{equation}\label{eq:ellprojdef}
\langle \nabla \P w(t,\cdot), \nabla \hil \phi \rangle = \langle \nabla w(t,\cdot), \nabla \hil \phi \rangle \quad \forall t \in [0,T].
\end{equation}
Notice that $\P$ coincides with the classical elliptic projection of the Laplacian if $\P w$ is chosen to interpolate $w$ on the boundary.
\begin{assumption}\label{ass:ellproj}
There are mappings \(\P\) satisfying \eqref{eq:ellprojdef} and there is a constant \(C\geq 0\) such that for every \( w \in C^{\infty}(\R^{d})\) and \(i\in\N\),
\Eb\label{ass:ellprojstab}
\norm{\P w}_{W^{1,\infty}(\O)} \leq C \norm{w}_{W^{1,\infty}(\O)}
\quad\text{and}\quad
\lim_{i\tends \infty} \norm{\P w - w}_{W^{1,\infty}(\O)}=0.
\Ee
\end{assumption}

The conditions under which the above assumption holds for the elliptic projection typically include a condition on the mesh grading and on the domain. In \cite{DLSW}, it is shown that \eqref{ass:ellproj} holds when \(\O\) is a bounded convex polyhedral domain in \(\R^d\), \(d \in \{2,3\} \), when the mesh satisfies a local quasi-uniformity condition and when the test functions vanish on the boundary. To apply the result for non-convex domains $\O$ and general \(w\in C^{\infty}(\R \times \R^d)\), consider for example a convex polyhedral domain $B$ containing $\O$  and assume there is a locally quasi-uniform mesh on $B$ which coincides with the original mesh on $\O$. Let \(\eta\) be a smooth cut-off function with compact support in \(B\) such that \(\eta \equiv 1\) on \(\O\). Then the classical elliptic projection on $B$, acting on $\eta w : B \to \R$, has the required properties. Given this construction for \(\P\), it is natural to refer to it as an elliptic projection.

\begin{lemma}\label{lem:ellconsistency1}
Let \(w\in C^{\infty}(\R \times \R^d)\) and let \( \bigl\{s_i^{k(i)}\bigr\}_{i=1}^{\infty} \) tend to \(t \in [0,T)\). Then
\Eb\label{eq:timederivconv}
\lim_{i\tends \infty} d_i \P w(s_i^{k(i)},\cdot) = \p_t w(t,\cdot) \text{ in } W^{1,\infty}(\O).
\Ee
\end{lemma}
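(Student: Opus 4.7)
The strategy is to exploit the linearity of the elliptic projection $\P$ so that it commutes with the time-difference operator $d_i$, thereby decoupling the temporal consistency of $d_i$ from the spatial consistency of $\P$. Define
\[
q_i(x) := \frac{w(s_i^{k(i)+1}, x) - w(s_i^{k(i)}, x)}{h_i},
\]
which lies in $C^\infty(\R^d)$ since $w \in C^\infty(\R \times \R^d)$. My first step is to verify that, viewed as an element of $V_i$,
\[
d_i \P w(s_i^{k(i)}, \cdot) \;=\; \frac{\P w(s_i^{k(i)+1}, \cdot) - \P w(s_i^{k(i)}, \cdot)}{h_i} \;=\; \P q_i.
\]
The first equality is immediate from the definition of $d_i$ together with the fact that $\P w(s, \cdot) \in V_i$ at every time $s$, so nodal values are coefficients in the hat-function basis. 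The second follows because the defining equation \eqref{eq:ellprojdef} is linear in the projected function, and the boundary interpolation used to fix $\P$ outside $V_i^0$ is also linear; uniqueness of the elliptic projection then yields the identity.

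Having reduced the claim to showing $\P q_i \to \p_t w(t, \cdot)$ in $W^{1,\infty}(\O)$, I would decompose
\[
\P q_i - \p_t w(t, \cdot) \;=\; \P\bigl(q_i - \p_t w(t, \cdot)\bigr) \;+\; \bigl(\P \p_t w(t, \cdot) - \p_t w(t, \cdot)\bigr).
\]
The second summand tends to zero in $W^{1,\infty}(\O)$ directly by Assumption \ref{ass:ellproj}, since $\p_t w(t, \cdot) \in C^\infty(\R^d)$. For the first summand, the stability bound in \eqref{ass:ellprojstab} gives
\[
\bignorm{\P\bigl(q_i - \p_t w(t, \cdot)\bigr)}_{W^{1,\infty}(\O)} \;\le\; C\, \bignorm{q_i - \p_t w(t, \cdot)}_{W^{1,\infty}(\O)},
\]
so it suffices to establish $q_i \to \p_t w(t, \cdot)$ in $W^{1,\infty}(\O)$ as $i \to \infty$.

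This final convergence is routine. Writing
\[
q_i(x) \;=\; \frac{1}{h_i}\int_0^{h_i} \p_t w(s_i^{k(i)} + \sigma, x) \, \d\sigma
\]
and differentiating in $x$ under the integral sign (justified by the smoothness of $w$), the uniform continuity of $\p_t w$ and $\nabla_x \p_t w$ on compact subsets of $\R \times \R^d$, together with $s_i^{k(i)} \to t \in [0,T)$ and $h_i \to 0$, yields uniform convergence of both $q_i$ and $\nabla q_i$ on $\O$ to $\p_t w(t, \cdot)$ and $\nabla \p_t w(t, \cdot)$ respectively. The only genuine subtlety in the whole argument is the commutation identity $d_i \P w = \P q_i$; once it is in place, the lemma is a direct consequence of the two defining properties of $\P$ in Assumption \ref{ass:ellproj}.
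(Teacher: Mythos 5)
Your proof is correct and uses essentially the same argument as the paper: commute $\P$ with $d_i$ via linearity, then split off one term controlled by the stability bound in \eqref{ass:ellprojstab} (whose convergence reduces to the classical fact that the difference quotient $q_i$ tends to $\p_t w(t,\cdot)$ in $W^{1,\infty}$) and one term controlled directly by the consistency part of Assumption~\ref{ass:ellproj}. The only cosmetic difference is that the paper inserts an extra intermediate term $d_i \P w(t,\cdot)$, splitting your stability estimate into two pieces, but the underlying reasoning is identical.
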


\begin{proof}
By linearity of $\P$ and \eqref{ass:ellprojstab}, the result follows in the limit $i \to \infty$ from
\begin{align*}
& \| d_i \P w(s_i^{k(i)},\cdot) -\p_t w(t,\cdot) \|_{W^{1,\infty}(\O)}\\
\le \, & \| d_i \P w(s_i^{k(i)},\cdot) - d_i \P w(t,\cdot) \|_{W^{1,\infty}(\O)} + \| d_i \P w(t,\cdot) - P_i \p_t w(t,\cdot) \|_{W^{1,\infty}(\O)} + \| P_i \p_t w(t,\cdot) -\p_t w(t,\cdot) \|_{W^{1,\infty}(\O)}\\
\le \, & C \| d_i w(s_i^{k(i)},\cdot)  - d_i w(t,\cdot) \|_{W^{1,\infty}(\O)} + C \| d_i w(t,\cdot) - \p_t w(t,\cdot) \|_{W^{1,\infty}(\O)} + \| P_i \p_t w(t,\cdot) -\p_t w(t,\cdot) \|_{W^{1,\infty}(\O)},
\end{align*}
where $d_i \P w(t,\cdot) = (\P w(t + h_i,\cdot) - \P w(t,\cdot) ) / h_i$, assuming $i$ is sufficiently large to ensure $t + h_i \le T$.
\end{proof}

\begin{lemma}\label{lem:ellconsistency2}
Let \(w\in C^{\infty}(\R \times \R^d)\) and let \( \bigl\{s_i^{k(i)}\bigr\}_{i=1}^{\infty} \) tend to \(t\in [0,T]\), \( \bigl\{y_i^{\ell(i)}\bigr\}_{i=1}^{\infty}\) tend to \( x \in \O\). Then
\Eb\label{eq:ellconsistency}
\lim_{i\tends \infty} \left( \sia E \P w(s_i^{k(i)+1},\cdot) + \sia I \P w(s_i^{k(i)},\cdot)  - \sia C\right)_{\ell(i)} = L^{\a} w(t,x) - d^{\a}(x) \quad \text{uniformly over all }\a \in A.
\Ee
\end{lemma}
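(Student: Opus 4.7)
The plan is to expand the left-hand side using the definitions \eqref{eq:discreteop}, then use the elliptic projection property to trade all gradient pairings for $L^2$ pairings against the bump $\hil \phi$, and finally recognise that $L^1$-normalised hat functions act as approximate Dirac masses at the nodes. Since $x\in\Omega$ and $y_i^{\ell(i)}\to x$, for $i$ large the node $y_i^{\ell(i)}$ lies in $\Omega$ and $\mathrm{supp}\,\hil \phi\subset\Omega$, so $\hil \phi$ is a legitimate interior test function in $V_i^0$. Abbreviate $w_{k+1} := w(s_i^{k(i)+1},\cdot)$ and $w_k := w(s_i^{k(i)},\cdot)$.

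First, by \eqref{eq:ellprojdef} applied with the test function $\hil \phi$ and since $w$ is smooth enough to integrate by parts with no boundary contribution,
\[
\langle \nabla \P w_{k+1},\nabla \hil \phi\rangle = \langle \nabla w_{k+1},\nabla \hil \phi\rangle = -\langle \Delta w_{k+1},\hil \phi\rangle,
\]
and identically for $w_k$. The second-order contribution to $\bigl(\sia E \P w_{k+1}+\sia I \P w_k\bigr)_{\ell(i)}$ therefore equals $-\bigl(\bia a(\il y) \langle \Delta w_{k+1},\hil \phi\rangle + \bbia a(\il y) \langle \Delta w_k,\hil \phi\rangle\bigr)$. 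Since $\hat{\phi}^{\ell}_i$ is non-negative with unit $L^1$-norm and support shrinking to $x$, and since $\Delta w$ is continuous, the pairing $\langle \Delta w(\sigma_i,\cdot),\hil\phi\rangle$ tends to $\Delta w(t,x)$ whenever $\sigma_i \to t$. Writing
\[
\bia a(\il y) + \bbia a(\il y) - a^\alpha(x) = \bigl[\bia a(\il y)+\bbia a(\il y)-a^\alpha(\il y)\bigr] + \bigl[a^\alpha(\il y)-a^\alpha(x)\bigr],
\]
the first bracket vanishes uniformly in $\alpha$ by Assumption~\ref{consistency} (evaluated at $\il y \in\mathrm{supp}\,\hil\phi$) and the second vanishes uniformly in $\alpha$ by equi-continuity of $\{a^\alpha\}$. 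Combined with the uniform $L^\infty$ bound \eqref{Lbounds} on the split coefficients this gives convergence of the diffusion contribution to $-a^\alpha(x)\Delta w(t,x)$ uniformly in $\alpha$.

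For the first- and zero-order terms, I would first use Assumption~\ref{ass:ellproj} to replace $\P w_{k+1}$ and $\P w_k$ by $w_{k+1}$ and $w_k$: the difference produces pairings of the form $\langle \bia b \cdot(\nabla\P w_{k+1}-\nabla w_{k+1}),\hil\phi\rangle$ etc., each bounded by a uniform constant times $\norm{\P w - w}_{W^{1,\infty}}$, which tends to $0$. Next, decompose
\[
\langle \bia b \cdot \nabla w_{k+1} + \bbia b \cdot \nabla w_k, \hil \phi\rangle = \langle (\bia b + \bbia b)\cdot\nabla w(t,\cdot),\hil\phi\rangle + \langle \bia b \cdot(\nabla w_{k+1}-\nabla w(t,\cdot)),\hil\phi\rangle + \langle \bbia b \cdot(\nabla w_k-\nabla w(t,\cdot)),\hil\phi\rangle.
\]
The last two terms vanish uniformly in $\alpha$ since $\nabla w$ is uniformly continuous in time and $\bia b, \bbia b$ are uniformly bounded. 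In the first term, Assumption~\ref{consistency} gives $\bia b + \bbia b = b^\alpha + R_i$ with $\norm{R_i}_\infty\to 0$ uniformly in $\alpha$; the residual disappears, and the remaining pairing $\langle b^\alpha\cdot\nabla w(t,\cdot),\hil\phi\rangle$ converges to $b^\alpha(x)\cdot\nabla w(t,x)$ uniformly in $\alpha$ by the approximate-Dirac property combined with equi-continuity of $\{b^\alpha\}$. The reaction and source terms $\langle \bia c \P w_{k+1}+\bbia c \P w_k - \ia d,\hil\phi\rangle$ are treated identically, yielding the limit $c^\alpha(x) w(t,x)-d^\alpha(x)$ uniformly in $\alpha$.

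Collecting all the limits reproduces $-a^\alpha(x)\Delta w(t,x) + b^\alpha(x)\cdot\nabla w(t,x) + c^\alpha(x) w(t,x) - d^\alpha(x) = L^\alpha w(t,x) - d^\alpha(x)$, uniformly over $\alpha\in A$. The main obstacle is not any one estimate but the bookkeeping for uniformity in $\alpha$: the individual split coefficients $\bia a,\bbia a,\bia b,\bbia b,\bia c,\bbia c$ need not converge separately, only their sums, so every bound must be phrased so that it depends on these coefficients only through the uniform $L^\infty$ bounds \eqref{Lbounds} and \eqref{react} (for the multiplying factors) and through the consistency assumption (for the combinations that must limit to the continuous coefficient).
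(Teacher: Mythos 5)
Your proof is correct and follows essentially the same route as the paper's: you use the defining orthogonality \eqref{eq:ellprojdef} of the elliptic projection (not just the $W^{1,\infty}$ stability) to eliminate $\P$ from the second-order terms exactly, integrate by parts, and then combine the approximate-Dirac property of $\hil\phi$ with Assumption~\ref{consistency} and equi-continuity to obtain the limit, while for the lower-order terms you invoke Assumption~\ref{ass:ellproj} to trade $\P w$ for $w$ in $W^{1,\infty}$. The only cosmetic difference is how the error is bracketed (the paper groups through $a^\alpha(\il y)\langle\nabla w(t,\cdot),\nabla\hil\phi\rangle$ as an intermediary; you group through $\Delta w(t,x)$ directly), and you should note that the uniform bound on the split coefficients $\bia a(\il y),\bbia a(\il y)$ that you invoke follows from Assumption~\ref{consistency} together with \eqref{Lbounds} for $i$ large, rather than from \eqref{Lbounds} alone.
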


\begin{proof}
For ease of notation, the dependence of \(k\) and \(\ell\) on \(i\) is made implicit. From the definition of $\P$ and integration by parts,
\begin{align*}
& \abs{ \bbia a (\il y) \pair{\nabla \P w(s_i^{k},\cdot)}{\nabla \hil \phi} + \bia a(\il y) \pair{\nabla \P w(s_i^{k+1},\cdot)}{\nabla \hil \phi} - a^{\a} (\il y) \pair{ \nabla w(t,\cdot)}{\nabla \hil \phi}}\\
= \, & \abs{ \bbia a (\il y) \pair{\nabla  w(s_i^{k},\cdot)}{\nabla \hil \phi} + \bia a(\il y) \pair{\nabla  w(s_i^{k+1},\cdot)}{\nabla \hil \phi} - a^{\a} (\il y) \pair{ \nabla w(t,\cdot)}{\nabla \hil \phi}} \\
\leq \, & \abs{ \left(a^{\a} (\il y) - \bbia a  (\il y) - \bia a  (\il y)\right) \pair{-\Delta w(t,\cdot)}{\hil \phi}}
 + \abs{\bbia a (\il y) \pair{ \Delta w(t,\cdot) - \Delta w(\ik s,\cdot)}{\hil \phi}}
 + \abs{\bia a (\il y) \pair{ \Delta w(t,\cdot)-\Delta w (s_i^{k+1},\cdot)}{\hil \phi}}.
\end{align*}
Using Assumption \ref{consistency} and the continuity of \(w\colon [0,T]\mapsto C^2(\oO)\) together with uniform boundedness of \( \bigl\{\abs{\bbia a (\il y)}\bigr\}_{\a\in A}\) and \(\bigl\{\abs{\bia a (\il y)}\bigr\}_{\a\in A} \), we conclude that
\eb
\lim_{i\tends \infty} \sup_{\a\in A} \abs{ \bbia a (\il y) \pair{\nabla \P w(s_i^{k},\cdot)}{\nabla \hil \phi} + \bia a(\il y) \pair{\nabla \P w(s_i^{k+1},\cdot)}{\nabla \hil \phi} - a^{\a} (\il y) \pair{ \nabla w(t,\cdot)}{\nabla \hil \phi}} =0.
\ee
Owing to the Heine-Cantor theorem for all \(\eps >0\), there is a \(\delta>0\) such that $| \Delta w(t,x) - \Delta w(t,y) | < \eps$ if $|x-y| < \delta$. Since, for \(i\) sufficiently large, the support of $\hil \phi$ is contained in the ball $B(x,\delta)$ and since \( \norm{ \hil \phi }_{L^1(\O)}=1\) as well as \(\hil \phi \geq 0\), we find
\eb
\abs{ \Delta w(t,x) - \pair{\Delta w(t,\cdot)}{\hil \phi}} < \eps.
\ee
As \( \left\{a^{\a}\right\}_{\a \in A}\) is an equi-continuous family of functions, we conclude that
\eb
\lim_{i\tends \infty} \sup_{\a \in A} \abs{ a^{\a} (\il y) \pair{\Delta w(t,\cdot)}{\hil \phi} - a^{\a}(x)\Delta w(t,x)} = 0;
\ee
thus showing that
\Eb\label{eq:ellconsist1}
\lim_{i\tends \infty} \sup_{\a\in A} \abs{ \bbia a (\il y) \pair{\nabla \P w(s_i^{k},\cdot)}{\nabla \hil \phi} + \bia a(\il y) \pair{\nabla \P w(s_i^{k+1},\cdot)}{\nabla \hil \phi} - \left(- a^{\a}(x)\Delta w(t,x)\right)} =0.
\Ee
Using Assumption \ref{ass:ellproj} and regularity of \(w\), we see that \( \P w(s_i^{k},\cdot) \) and \( \P w(s_i^{k+1},\cdot)\) converge to \(w(t,\cdot)\) in \(W^{1,\infty}(\O)\). It can then be shown by analogous estimates and by using the equi-continuity of \( \left\{b^{\a}\right\}_{\a\in A}\), \(\left\{c^{\a}\right\}_{\a\in A}\) and \(\left\{d^{\a}\right\}_{\a\in A}\), that
\begin{subequations} \label{eq:ellconsist_comb}
\begin{gather}
\lim_{i\tends \infty} \sup_{\a \in A} \abs{ \pair{\bbia b \cdot \nabla \P w(s_i^k,\cdot)}{\hil \phi}+\pair{\bia b \cdot \nabla \P w(s_i^{k+1},\cdot)}{\hil \phi} - b^{\a}(x)\cdot \nabla w(t,x) } =0, \label{eq:ellconsist2} \\
\lim_{i\tends \infty} \sup_{\a \in A} \abs{ \pair{\bbia c \P w(s_i^k,\cdot)}{\hil \phi} + \pair{\bia c \P w(s_i^{k+1},\cdot)}{\hil \phi}- c^{\a}(x)w(t,x)}=0, \label{eq:ellconsist3}\\
\lim_{i\tends \infty} \sup_{\a\in A} \abs{ \pair{d_i^{\a}}{\hil \phi} - d^{\a}(x)} =0.\label{eq:ellconsist4}
\end{gather}
\end{subequations}
Combining equations \eqref{eq:ellconsist1} and \eqref{eq:ellconsist_comb} yields \eqref{eq:ellconsistency}.\end{proof}

%%%%%%%%%%%%%%%%%%%%%%%%%%%%%%%%%%%%%%%%%%%%%%%%%%%%%%%%%%%%%%%%%%%%%%%%
\section{Sub- and supersolution}
%%%%%%%%%%%%%%%%%%%%%%%%%%%%%%%%%%%%%%%%%%%%%%%%%%%%%%%%%%%%%%%%%%%%%%%%

Set
\[
v^*(t,x) = \sup_{(\ik s,\il y) \to (t,x)} \limsup_{i \to \infty} v_i(\ik s,\il y), \qquad v_*(t,x) = \inf_{(\ik s,\il y) \to (t,x)} \liminf_{i \to \infty} v_i(\ik s,\il y)
\]
where the limit superior and limit inferior are taken over all sequences of nodes in $[0,T] \times \oO$ which converge to $(t,x) \in [0,T] \times \oO$. By construction, $v^*$ is upper and $v_*$ lower semi-continuous. With the use of elliptic projection operators key steps of the convergence proof in \cite{BS91}, which is stated there in a suitable form for finite difference methods, are transferred to finite element schemes, which do not satisfy the consistency condition in \cite{BS91}.

\begin{theorem}
The function $v^*$ is a viscosity subsolution of \eqref{HJB} and $v_*$ is a viscosity supersolution of \eqref{HJB}.
\end{theorem}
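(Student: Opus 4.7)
The plan is to follow the classical Barles--Souganidis strategy with two modifications tailored to the finite element setting: the nodal interpolant is replaced by the elliptic projection $\P$, and the pointwise monotonicity used in finite-difference arguments is replaced by the LMP of Assumption~\ref{monotonicity} together with the entrywise non-positivity of $h_i \sia E - \Id$. I treat the subsolution assertion in detail; the supersolution follows by a symmetric argument indicated at the end. Fix $w \in C^\infty(\R \times \R^d)$ such that $v^* - w$ has a strict local maximum at $(t,x) \in (0,T) \times \O$ with $v^*(t,x) = w(t,x)$. Since $\norm{\P w - w}_{W^{1,\infty}(\O)} \to 0$ by Assumption~\ref{ass:ellproj}, the function $\phi_i := v_i - \P w$ inherits the approximate local maximum structure of $v_i - w$; a standard extraction based on the strictness of the extremum produces grid points $(s_i^{k(i)}, y_i^{\ell(i)})$, lying at interior nodes and in $(0,T)$ for $i$ large, which maximise $\phi_i$ over a fixed closed neighbourhood $N$ of $(t,x)$, converge to $(t,x)$, and for which $M_i := \phi_i(s_i^{k(i)}, y_i^{\ell(i)}) \to 0$.

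The central step substitutes $v_i = \P w + \phi_i$ into the scheme \eqref{num} at the extracted point. Abbreviating $B_i^\a := \bigl[\sia E \P w(s_i^{k(i)+1}, \cdot) + \sia I \P w(s_i^{k(i)}, \cdot) - \sia C\bigr]_{\ell(i)}$, the scheme reads
\[
d_i \P w(s_i^{k(i)}, y_i^{\ell(i)}) + d_i \phi_i(s_i^{k(i)}, y_i^{\ell(i)}) \,=\, \sup_{\a \in A} \Bigl( B_i^\a + \bigl[\sia E \phi_i(s_i^{k(i)+1}, \cdot) + \sia I \phi_i(s_i^{k(i)}, \cdot)\bigr]_{\ell(i)} \Bigr).
\]
For the implicit contribution, $M_i - \phi_i(s_i^{k(i)}, \cdot) \in V_i$ has a non-positive minimum at the internal node $y_i^{\ell(i)}$, so the LMP of Assumption~\ref{monotonicity} together with linearity of $\sia I$ on $V_i$ yields $\bigl[\sia I \phi_i(s_i^{k(i)}, \cdot)\bigr]_{\ell(i)} \ge M_i \langle \bbia c, \hil \phi\rangle$. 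For the combined temporal-explicit contribution, the identity $h_i \bigl( -d_i \phi_i(s_i^{k(i)}, \cdot) + \sia E \phi_i(s_i^{k(i)+1}, \cdot) \bigr)_{\ell(i)} = M_i + \bigl( (h_i \sia E - \Id) \phi_i(s_i^{k(i)+1}, \cdot) \bigr)_{\ell(i)}$, combined with $(h_i \sia E - \Id)_{\ell j} \le 0$ and $\phi_i(s_i^{k(i)+1}, y^j) \le M_i$ for every node $y^j$ in the support of $\hil \phi$ (contained in $N$ for $i$ large), gives $-d_i \phi_i(s_i^{k(i)}, \cdot) + \bigl[\sia E \phi_i(s_i^{k(i)+1}, \cdot)\bigr]_{\ell(i)} \ge M_i \langle \bia c, \hil \phi\rangle$. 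Summing both bounds and invoking \eqref{react} with $\norm{\hil \phi}_{L^1(\O)} = 1$ produces, uniformly over $\a \in A$, the lower bound $-d_i \phi_i + \bigl[\sia E \phi_i(s_i^{k(i)+1},\cdot) + \sia I \phi_i(s_i^{k(i)},\cdot)\bigr]_{\ell(i)} \ge -\gamma|M_i| =: -\eps_i$, with $\eps_i \to 0$.

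Substituting this into the scheme identity and using that $d_i \phi_i$ does not depend on $\a$ rearranges to $-d_i \P w(s_i^{k(i)}, y_i^{\ell(i)}) + \sup_{\a \in A} B_i^\a \le \eps_i$. Passing $i \to \infty$, Lemma~\ref{lem:ellconsistency1} supplies $d_i \P w(s_i^{k(i)}, y_i^{\ell(i)}) \to \p_t w(t,x)$, while Lemma~\ref{lem:ellconsistency2} gives $B_i^\a \to L^\a w(t,x) - d^\a(x)$ uniformly in $\a$; the uniformity allows exchange of supremum and limit, yielding $-\p_t w(t,x) + H w(t,x) \le 0$. The supersolution statement is obtained by a symmetric argument: $v_* - w$ has a strict local minimum at $(t,x)$, the extracted points are now space-time minima of $\phi_i$, the LMP applied to $\phi_i(s_i^{k(i)}, \cdot) - M_i$ and the analogous monotonicity computation for the explicit part reverse every inequality to give the upper bound $-d_i \phi_i + \bigl[\sia E \phi_i + \sia I \phi_i\bigr] \le \eps_i$, and the same rearrangement produces $-\p_t w(t,x) + H w(t,x) \ge 0$. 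The main obstacle I anticipate is the treatment of $\sia E \phi_i(s_i^{k(i)+1}, \cdot)$: because $\phi_i$ at the advanced time level is not a priori extremal in space, it resists a direct discrete maximum principle; the resolution is to fuse it with the temporal difference $-d_i \phi_i$, observing that $M_i$ dominates $\phi_i(s_i^{k(i)+1}, y^j)$ at all neighbouring nodes, which converts the entrywise sign supplied by Assumption~\ref{monotonicity} into the required one-sided bound.
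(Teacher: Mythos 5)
Your proposal is correct and follows essentially the same route as the paper: an elliptic projection $\P$ in place of a nodal interpolant, extraction of maximising grid points converging to the extremum, the LMP for the implicit operator, entrywise non-positivity of $h_i\sia E-\Id$ for the explicit part fused with the time difference, absorption of the reaction terms via \eqref{react}, and then the two consistency lemmas in the limit. The paper works directly with $v_i$ and $\mu_i = v_i(\ik s,\il y)-\P w(\ik s,\il y)$ rather than with $\phi_i=v_i-\P w$ and $M_i$, but this is a purely notational difference, and your identity $h_i\bigl(-d_i\phi_i+\sia E\phi_i(s_i^{k+1},\cdot)\bigr)_{\ell}=M_i+\bigl((h_i\sia E-\Id)\phi_i(s_i^{k+1},\cdot)\bigr)_{\ell}$ just makes explicit the rearrangement the paper carries out in display \eqref{eq:subsolineq1}.
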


\begin{proof}
{\em Step 1 ($v^*$ is a subsolution).}  To show that $v^*$ is a viscosity subsolution, suppose that \(w \in C^{\infty}(\R \times \R^d)\) is a test function such that \(v^*-w\) has a strict local maximum at \( (s,y) \in (0,T)\times\O\), with \( v^*(s,y)=w(s,y)\). Consider a neighbourhood $B := \bigl\{ (t,x) \in (0,T)\times \O\; : \; |t-s|+|x-y| \leq \delta \bigr\}$ with \(\delta>0\) such that
\eb
v^*(s,y)-w(s,y) > v^*(t,x)-w(t,x) \quad \forall (t,x) \in B \setminus (s,y).
\ee
Choose $i$ sufficiently large for $B$ to contain nodes. Let \((s_i^k,y_i^\ell)\) denote the position where \( v_i(\ika s,\ila y) - \P w(\ika s,\ila y)\) attains the maximum among all nodes \((\ika s,\ila y) \in B \). Let us pass to a subsequence \(\bigl\{ (s_{i(j)}^k,y_{i(j)}^{\ell}) \bigr\}_{j\in\N}\) of \( \bigl\{(\ik s,\il y)\bigr\}_{i\in\N}\) for which \(\bigl\{v_i(s_{i(j)}^k,y_{i(j)}^{\ell})\bigr\}_{j\in\N}\) converges to the limit superior of \( \bigl\{v_i(\ik s,\il y)\bigr\}_{i\in\N}\). By compactness of \( B \), there is a subsequence of \(\bigl\{(s_{i(j)}^k,y_{i(j)}^{\ell})\bigr\}_{j\in\N}\) converging to a point \((\tilde{s} ,\tilde{y}) \in B \). Then \(\P w(s_{i(j)}^k,y_{i(j)}^{\ell}) \tends w(\tilde{s},\tilde{y})\) from \eqref{ass:ellprojstab} and by continuity of \(w\). As the \( (\ik s ,\il y) \) are maximisers, one has
\eb
v^*(\tilde{s},\tilde{y})-w(\tilde{s},\tilde{y}) = \limsup_{i \tends \infty} v_i(s_{i(j)}^k, y_{i(j)}^k)-\P w(s_{i(j)}^k, y_{i(j)}^k) = v^*(s,y)-w(s,y);
\ee
hence \( (\tilde{s},\tilde{y}) = (s,y) \) since \((s,y)\) is a strict maximiser of \(v^*-w\) on \( B \). Thus there is a subsequence of maximising nodes converging to \((s,y)\) to which we now pass without change of notation: \((\ik s,\il y) \tends (s,y)\). It follows that
\begin{align} \label{eq:defmu}
v_i(\ik s,\il y)-\P w(\ik s,\il y) \tends v^*(s,y)-w(s,y)=0.
\end{align}
Moreover, because of \((\ik s,\il y) \tends (s,y)\), the neighbours of the $(\ik s,\il y)$ eventually also belong to $B$: For \(i\) sufficiently large we have \(
(\ika s,\ila y) \in B \) if \(\kappa \in \left\{k,k+1\right\}\) and \(\ila y \in \supp \hil \phi\); in which case
\eb
v_i(\ika s,\ila y) - \P w (\ika s,\ila y) \leq v_i(\ik s,\il y) - \P w (\ik s,\il y) \qquad \Leftrightarrow \qquad \P w (\ika s,\ila y) + \mu_i \geq v_i  (\ika s,\ila y),
\ee
with \(\mu_i =  v_i(\ik s,\il y) - \P w (\ik s,\il y)\). Notice that $\mu_i \to 0$ as $i \to \infty$ because of \eqref{eq:defmu}.

Since the matrices \(\sia E\) have non-zero off diagonal entries \( \left(\sia E\right)_{\ell \lambda}\) only if \( \ila y \in \supp \hil \phi\), we have for all \(\a \in A\)
\eb
\left( (h_i \sia E - \Id ) \left[ \P w (s^{k+1},\cdot)+\mu_i \right] \right)_{\ell} \leq \left( (h_i \sia E-\Id) v_i(s_i^{k+1},\cdot)\right)_{\ell}.
\ee
By the LMP property and linearity of \( \sia I \), since \( \P w (\ik s,\cdot) +\mu_i - v_i(\ik s,\cdot)\) has a non-positive minimum at \( \il y\),
\eb
\left( (h_i \sia I + \Id ) \left[\P w(\ik s,\cdot)+\mu_i \right]\right)_{\ell} \leq \left( (h_i \sia I + \Id) v_i(\ik s,\cdot)\right)_{\ell}.
\ee
From the definition of the scheme,
\begin{subequations}
\begin{align}
0 &= - d_i v_i (\ik s,\il y) + \sup_{\a \in A} \left( \sia E v_i(s_i^{k+1},\cdot)+\sia I v_i(\ik s,\cdot) - \sia C \right)_{\ell}  \\
 & \geq - d_i \left( \P w (\ik s,\il y) + \mu_i \right) + \sup_{\a\in A} \left( \sia E \left( \P w (s_i^{k+1} ,\cdot) +  \mu_i \right) + \sia I \left( \P w (\ik s,\cdot) + \mu_i \right) - \sia C \right)_{\ell}\\
& = - d_i \P w(\ik s, \il y) + \sup_{\a \in A} \left[ \left( \sia E \P w (s_i^{k+1} ,\cdot) + \sia I\P w (\ik s,\cdot) - \sia C \right)_{\ell} + \mu_i \pair{\bia c+\bbia c}{\hil \phi}\right] \\
& \geq  - d_i \P w(\ik s, \il y) + \sup_{\a \in A} \left( \sia E \P w (s_i^{k+1} ,\cdot) + \sia I\P w (\ik s,\cdot) - \sia C \right)_{\ell} - \gamma \abs{\mu_i}.\label{eq:subsolineq1}
\end{align}
\end{subequations}
Since
\begin{align*}
& \abs{\sup_{\a\in A} \left( \sia E \P w (s_i^{k+1} s,\cdot) + \sia I\P w (\ik s,\cdot) - \sia C \right)_{\ell} - \sup_{\a\in A} \left( L^{\a} w(s,y)-d^{a}(y)\right)} \\
\leq \, & \sup_{\a\in A} \abs{  \left( \sia E \P w (s_i^{k+1} s,\cdot) + \sia I\P w (\ik s,\cdot) - \sia C \right)_{\ell} - \left(  L^{\a} w(s,y)-d^{a}(y) \right) },
\end{align*}
Lemmas \ref{lem:ellconsistency1} and \ref{lem:ellconsistency2} show that after taking the limit \(i\tends \infty\) in inequality \eqref{eq:subsolineq1} and recalling that \(\mu_i \tends 0\), we obtain
\Eb
0 \geq - \p_t w(s,y)+\sup_{\a\in A} \left(L^{\a} w(s,y)-d^{\a}(y)\right).
\Ee
Therefore \(v^*\) is a viscosity subsolution.

{\em Step 2 ($v_*$ is a supersolution).} Arguments similar to those above show that \(v_*\) is a viscosity supersolution, where the principal changes are that one considers \(w \in C^{\infty}(\R \times \R^d)\) such that \(v_*-w\) has a strict local minimum at some \((s,y)\in(0,T)\times\O\) with \(v_*(s,y)=w(s,y)\). Using analogous notation, inequality \eqref{eq:subsolineq1} corresponds to
\eb
0 \leq -d_i \P w(\ik s,\il y)+\sup_{\a\in A}\left(\sia E\P w(s_i^{k+1},\cdot)+\sia I \P w(\ik s,\cdot)-\sia C\right)_{\ell}+ \gamma \abs{\mu_i},
\ee
i.e. there is a slight asymmetry in the argument due to the last sign in \eqref{eq:subsolineq1}. Nevertheless it is then deduced that
\eb
0 \leq -\p_t w(s,y)+\sup_{\a \in A}\left(L^{\a} w(s,y)-d^{\a}(y)\right).
\ee
Thus \(v_*\) is a viscosity supersolution.
\end{proof}

%%%%%%%%%%%%%%%%%%%%%%%%%%%%%%%%%%%%%%%%%%%%%%%%%%%%%%%%%%%%%%%%%%%%%%%%
\section{Uniform convergence}
%%%%%%%%%%%%%%%%%%%%%%%%%%%%%%%%%%%%%%%%%%%%%%%%%%%%%%%%%%%%%%%%%%%%%%%%

We now turn to the initial and boundary conditions. Together with the sub- and supersolution property we appeal to a comparison principle to obtain uniform convergence of the numerical solutions.

For each \(\a \in A\), define
\eb
v^{\a,*}(t,x) = \sup_{(\ik s,\il y) \to (t,x)} \limsup_{i \to \infty} v_i^{\a}(\ik s,\il y);
\ee
where the $v_i^\alpha$ are as in \eqref{numlinearsol} and the limit superior is taken over all sequences of nodes which converge to $(t,x) \in [0,T] \times \oO$.
\begin{assumption}\label{ass:boundaryconv}
Suppose that for each \((t,x)\in[0,T]\times\pO\)
\begin{equation}\label{eq:boundaryconv}
\inf_{\a\in A} v^{\a,*}(t,x)=0.
\end{equation}
\end{assumption}

Before further considerations, let us motivate Assumption~\ref{ass:boundaryconv} with a simple example. As a side remark, this example also illustrates how in some settings Kushner-Dupuis finite difference schemes, as described in \cite{KD01,FS06}, may be interpreted as finite element methods in the framework of this paper.

\begin{example}\label{ex:hjbboundary}
Consider the backward time-dependent equation in one spatial dimension
\Eb\label{eq:hjbexampleeq}
-v_t+\abs{v_x}=1 \quad \text{on } (0,1)\times(-1,1),
\Ee
with boundary conditions \(v=0\) on \([0,1]\times\left\{-1,1\right\}\cup\left\{1\right\}\times[-1,1]\).  Equation \eqref{eq:hjbexampleeq} may be re-written in HJB form as
\eb
-v_t+\sup_{\a \in \left\{-1,1\right\}}\left(\a u_x-1\right)=0.
\ee
The viscosity solution is \( v=\min(1-t,1-\abs{x})\). We choose a uniform mesh with element size \(2 (\Delta x)_i\) and we use a fully explicit discretisation, where monotonicity will be achieved by using the method of artificial diffusion, as described in \cite{BE02}. Thus we have
\eb
\left(\sia E w\right)_{\ell} = \eps \pair{\p_xw}{\p_x \hil \phi} + \alpha \pair{\p_x w}{\hil \phi},
\ee
where \(\eps\) is the artificial diffusion parameter to be chosen to obtain a monotone scheme.  Calculating the entries shows that the \(\sia E\) are of the form
\eb
\eps
\begin{pmatrix}
\frac{2}{(\Delta x)_i^2} & - \frac{1}{(\Delta x)_i^2} &  &  & \\
 \ddots & \ddots & \ddots &  &   \\
 & -\frac{1}{(\Delta x)_i^2} & \frac{2}{(\Delta x)_i^2} &
-\frac{1}{(\Delta x)_i^2} &  \\
  &  & \ddots & \ddots & \ddots \\
 &  &  & - \frac{1}{(\Delta x)_i^2} & \frac{2}{(\Delta x)_i^2}
\end{pmatrix}
+ \a
\begin{pmatrix}
0 & \frac{1}{2 (\Delta x)_i} &  &  & \\
\ddots & \ddots & \ddots &   &  \\
 & -\frac{1}{2 (\Delta x)_i} & 0 & \frac{1}{2 (\Delta x)_i} &  \\
  &  & \ddots & \ddots & \ddots \\
 &  &  & -\frac{1}{2 (\Delta x)_i} & 0\\
\end{pmatrix}.
\ee
For monotonicity we require that all off-diagonal terms of the \(\sia E\) be non-positive, \ie we require \(\eps \geq (\Delta x)_i/2\). For example the special choice \(\eps=(\Delta x)_i/2\) yields
\eb
\mathsf{E}^1_i=
\begin{pmatrix}
\frac{1}{(\Delta x)_i} & 0 &  &  & \\
\ddots & \ddots &  &  &   \\
 & -\frac{1}{(\Delta x)_i} & \frac{1}{(\Delta x)_i} & 0 & \\
 &  & \ddots & \ddots & \\
&  &  & -\frac{1}{(\Delta x)_i} & \frac{1}{(\Delta x)_i}\\
\end{pmatrix};
\quad
\mathsf{E}^{-1}_i=
\begin{pmatrix}
\frac{1}{(\Delta x)_i} & -\frac{1}{(\Delta x)_i} & &  & \\
 & \ddots & \ddots &  &   \\
 & 0 & \frac{1}{(\Delta x)_i} & -\frac{1}{(\Delta x)_i} &  \\
 &  &  & \ddots & \ddots\\
 &  &  & 0 & \frac{1}{(\Delta x)_i}\\
\end{pmatrix}.
\ee
This is equivalent to discretising the spatial part of \(-v_t+v_x\) with backward finite differences and discretising the spatial part of \(-v_t-v_x\) with forward finite differences, as can be done in applying a Kushner-Dupuis scheme. It can then be deduced, whilst using appropriate time steps, that \(v_i^{1}\) approximates the solution of
\[
-v_t+v_x=1 \text{ on } (0,1)\times(-1,1), \qquad v = 0 \text{ on }
(0,T)\times\left\{-1\right\}\cup\left\{1\right\}\times(-1,1);
\]
while \(v_i^{-1}\) approximates the solution of
\[
-v_t-v_x=1 \text{ on } (0,1)\times(-1,1), \qquad v = 0 \text{ on }
(0,T)\times\left\{1\right\}\cup\left\{1\right\}\times(-1,1).
\]
Consequently, Assumption~\ref{ass:boundaryconv} is enforced by $v^{1,*}$ on $[0,1]\times\left\{-1\right\}$ and by $v^{-1,*}$ on $[0,1]\times\left\{1\right\}$.
\end{example}

Recall from Theorem~\ref{thm:discretewellposedness} that
\eb
0 \leq v_i \leq v_i^{\a} \quad\text{for all }\a \in A,
\ee
and note that by construction \(0 \le v_* \le v^*\). Assumption~\ref{ass:boundaryconv} thus implies that $v_*|_{[0,T] \times \pO} = v^*|_{[0,T] \times \pO} = 0$. Observe that because \eqref{eq:boundaryconv} holds in particular for all \((t,x) \in \{T\}\times\pO\), Assumption~\ref{ass:boundaryconv} implicitly enforces that the initial condition $v_T$ vanishes on $\pO$ as the $v_i^\alpha$ interpolate $v_T$ at the final time.

\begin{lemma}\label{lem:initalconverge}
The sub and super-solutions \(v^*\) and \(v_*\) satisfy 
\Eb\label{eq:initialconverge2} 
v^*(T,\cdot)=v_*(T,\cdot)=v_T \quad\text{ on } \, \oO.
\Ee
\end{lemma}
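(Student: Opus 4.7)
Since $v_i(T,\il y)=v_T(\il y)$ and $v_T$ is continuous, taking sequences with $s_i^{k(i)}\equiv T$ and $\il y\to x$ yields the trivial bounds $v_*(T,x)\le v_T(x)\le v^*(T,x)$ for every $x\in\oO$, so only the reverse inequalities need to be proved. For $x\in\pO$ I would combine $0\le v_i\le\ia v$ from Theorem~\ref{thm:discretewellposedness} with Assumption~\ref{ass:boundaryconv}: taking limits superior and inferior,
\[
0\le v_*(T,x)\le v^*(T,x)\le \inf_{\a\in A} v^{\a,*}(T,x)=0,
\]
and the preceding remark identifies $v_T(x)=0$ on $\pO$, so the boundary case is settled.

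For $x\in\O$ a barrier argument is used. Fix $\eps>0$ and pick a smooth $\tilde v_T\in C^\infty(\R^d)$ with $\|\tilde v_T-v_T\|_{L^\infty(\oO)}<\eps$. Choosing $C_\eps>0$ large enough in terms of $\|\tilde v_T\|_{C^2(\oO)}$ and the bounds in \eqref{Lbounds}, the functions $\phi^\pm_\eps(t,x):=\tilde v_T(x)\pm\eps\pm C_\eps(T-t)$ are respectively classical strict super- and subsolutions of $-\p_t v+Hv=0$ with a uniform margin $\eta>0$. Lemmas~\ref{lem:ellconsistency1}--\ref{lem:ellconsistency2} then show that $\P\phi^\pm_\eps$ inherit the analogous strict discrete sub- and supersolution properties uniformly in $\a\in A$ for all sufficiently large $i$. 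At $t=T$, Assumption~\ref{ass:ellproj} together with the uniform convergence of the nodal interpolant of $v_T$ gives $\P\phi^-_\eps(T,\il y)\le v_i(T,\il y)\le \P\phi^+_\eps(T,\il y)$ for $i$ large, while at boundary nodes $\phi^-_\eps<0=v_i<\phi^+_\eps$. A discrete comparison then propagates these inequalities backwards in time, yielding $\P\phi^-_\eps\le v_i\le\P\phi^+_\eps$ on the whole space-time grid. Passing $i\to\infty$ gives $\tilde v_T(x)-\eps\le v_*(T,x)\le v^*(T,x)\le\tilde v_T(x)+\eps$, and letting $\eps\to 0$ concludes.

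The main obstacle is the global discrete comparison used to push the barriers back in time. At a space-time node maximising $v_i-\P\phi^+_\eps$ (respectively minimising $v_i-\P\phi^-_\eps$) the LMP of $\siwk I$ and the monotonicity of $h_i\siwk E-\Id$ from Lemma~\ref{lem:monotonicity} must be combined with the pointwise consistency of Lemmas~\ref{lem:ellconsistency1}--\ref{lem:ellconsistency2} to force the extremum onto the parabolic boundary. The strict margin $\eta$ in the sub- and supersolution property of $\phi^\pm_\eps$ is essential, as it provides the buffer needed to dominate the consistency defect inherent to finite element discretisations on nodally inconsistent meshes.
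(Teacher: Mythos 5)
Your overall architecture — reduce to the reverse inequalities, dispose of $\partial\O$ via Assumption~\ref{ass:boundaryconv}, then run a barrier argument with $\phi^\pm_\eps(t,x)=\tilde v_T(x)\pm\eps\pm C_\eps(T-t)$ and propagate it through the discrete scheme by comparison — is the same as the paper's, which uses $w_i = \P v_T^\eps \mp K(T-t)$ and the wDMP of Lemma~\ref{lem:monotonicity}. The trivial bounds and the boundary case are handled correctly.

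The gap is in the sentence claiming that Lemmas~\ref{lem:ellconsistency1}--\ref{lem:ellconsistency2} show that $\P\phi^\pm_\eps$ are strict discrete sub-/supersolutions \emph{for all sufficiently large $i$}. Those lemmas are pointwise consistency statements: they assert convergence of the discrete residual along a \emph{single} sequence of nodes $(s_i^{k(i)},y_i^{\ell(i)})\to(t,x)$ with $x\in\O$ and (in Lemma~\ref{lem:ellconsistency1}) $t\in[0,T)$, uniformly in $\a$ but \emph{not} uniformly over all grid nodes. For the barrier to be usable in a global discrete comparison you need the residual inequality to hold simultaneously at every internal node and every time level for all large $i$. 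You would have to upgrade the lemmas by a compactness/contradiction argument over $[0,T]\times\oO$, and then you would still have to handle the cases excluded by the lemmas (nodes approaching $\partial\O$, times $s_i^k\to T$). None of this is addressed. The same uniformity question resurfaces in your final paragraph where the "strict margin $\eta$" is said to dominate the consistency defect: the defect has to be \emph{uniformly} small for this to mean anything.

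The paper avoids this entirely by choosing the barrier slope $K$ to bound the \emph{discrete} spatial residual directly: by the defining property \eqref{eq:ellprojdef} of $\P$ and integration by parts, $\langle \nabla \P v_T^\eps, \nabla\hil\phi\rangle = \langle\nabla v_T^\eps,\nabla\hil\phi\rangle = -\langle\Delta v_T^\eps,\hil\phi\rangle$, which together with $\norm{\hil\phi}_{L^1}=1$, Assumption~\ref{ass:ellproj} and \eqref{Lbounds} gives a constant $K(\eps)$ bounding $\bigl|\bigl((\sia E+\sia I)\P v_T^\eps-\sia C\bigr)_\ell\bigr|$ uniformly in $i$, $\ell$, $\a$. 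The barrier $\P v_T^\eps - K(T-t)$ is then a discrete subsolution at \emph{every} node by a two-line computation (using $\bia c,\bbia c\ge 0$), and the wDMP propagates the bound backward in time. No continuous sub-/supersolution property of the barrier and no consistency lemma are ever invoked. I would drop the detour through the continuous equation and Lemmas~\ref{lem:ellconsistency1}--\ref{lem:ellconsistency2} and bound the discrete residual directly; otherwise you must supply the uniform consistency step explicitly.
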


\begin{proof}
Fix $\eps > 0$ and choose a $v_T^\eps \in C^{\infty}(\R^{d})$ such that $v_T - 2 \eps \ge v_T^\eps \ge v_T - 3 \eps$. Owing to Assumption \ref{ass:ellproj} there is \(n\in\N\) such that $\| \P v_T^\eps - v_T^\eps \|_{L^{\infty}(\O)} \leq \eps$ and $\| \m I_i v_T - v_T \|_{L^{\infty}(\O)} \leq \eps$ for all $i \ge n$. Hence, for $i \ge n$, 
\begin{align} \label{eq:initalconverge3}
v_i(T, \cdot) = \m I_i v_T \geq \P v_T^\eps \geq v_T - 4 \eps.
\end{align}
Recalling \eqref{eq:ellprojdef} and as $v_T^\eps \in C^{\infty}(\R^{d})$, it is clear that there exists \(K = K(\eps) \geq 0\) which bounds
\begin{align*}
\bigl| \bigl( \bigl( \sia E+\sia I \bigr)\P v_T^\eps - \sia C \bigr)_{\ell} \bigr| = \bigl| - \bigl( \bia a(\il y) + \bbia a(\il y) \bigr) \bigl\langle \Delta v_T^\eps, \hil \phi \bigr\rangle + \bigl\langle \bigl( \bia b(\il y) + \bbia b(\il y) \bigr) \cdot \nabla \P v_T^\eps + \bigl( \bia c(\il y) + \bbia c(\il y) \bigr) \P v_T^\eps, \hil \phi \bigr\rangle - \bigl( \sia C \bigr)_{\ell} \bigr|
\end{align*}
for all \( i \in \N \), \(\ell \in \{1, \ldots, N\} \) and \(\a \in A \). Define $w_i = \P v_T^\eps - K (T-t)$. To show that \(v_i(\ik s,\cdot) \geq w_i(\ik s,\cdot)\) assume \(v_i(s_i^{k+1},\cdot) \geq w_i(s_i^{k+1},\cdot)\), noting \eqref{eq:initalconverge3} for $s_i^{k+1} = T$. Fix an $i$ and $\ell$ and let $\alpha = \alpha_i^{k,\ell}(v_i)$ as for \eqref{eq:max_alpha}. From 
\begin{align*}
 -d_i w_i (\ik s,\il y) + \bigl( \sia E w_i(s_i^{k+1},\cdot)+ \sia I w_i(s_i^{k+1},\cdot) \bigr)_{\ell}
& = -K + \left( \left(\sia E + \sia I\right)\P v_T^\eps \right)_{\ell} - K(T-s_i^{k+1})\pair{\bia c}{\hil \phi} - K(T-\ik s)\pair{\bbia c}{\hil\phi} \\
& \leq \left(\sia C\right)_{\ell} \stackrel{\eqref{numsol}}{=} - d_i v_i(\ik s,\il y) + \bigl( \si E^{v_i} v_i(s_i^{k+1},\cdot)+ \si I^{v_i} v_i(s_i^{k+1},\cdot) \bigr)_{\ell}
\end{align*}
we may deduce that
\[
 \left(\left(h_i \mathsf{I}_i^{v_i}+\Id\right) \left[ v_i(\ik s,\cdot)-w_i(\ik s,\cdot) \right]\right)_{\ell}
 \geq \left(\left(h_i\mathsf{E}_{i}^{v_i} - \Id\right) \left[ v_i(\ik s, \cdot)-w_i(\ik s ,\cdot)\right]\right)_{\ell} \geq 0.
\]
Note that \(v_i(s_i^k,\cdot) \in V_i^0\) vanishes on \(\pO\) and \(w_i(\ik s,\cdot) \leq 0\) on \(\pO\). Thus Lemma \ref{lem:monotonicity} and \eqref{eq:wDMP} imply \(v_i(\ik s,\cdot) \geq w_i(\ik s,\cdot)\) on \(\oO\). Because \(K\) is independent of \(i\) and \( \P v_T^\eps \tends v_T^\eps \) as $i \to \infty$, we have for any sequence \( \bigl(\ik s ,\il y\bigr) \tends \bigl(T,x \bigr)\), \(x \in \O\),
\begin{align*} %\label{eq:innerbound}
\liminf_{i\tends \infty} v_i  \bigl(\ik s ,\il y\bigr) &\geq \liminf_{i\tends \infty} w_i\bigl(\ik s ,\il y\bigr) \geq v_T(x)- 4 \eps.
\end{align*}
So \(v_*(T,\cdot) \geq v_T - 4 \eps\). Since \(\eps\) was arbitrary, \( v_*(T,\cdot) \geq v_T\). The argument for showing that $v^* \le v_T$ is analogous with $w_i = P_i v_T^\eps + K (T- t)$ and $v_T + 2 \eps \le v_T^\eps \le v_T + 3 \eps$. To conclude, $v_T \le v_*(T,\cdot) \le v^*(T,\cdot) \le v_T$, which proves \eqref{eq:initialconverge2}.
\end{proof}

The proof of Lemma \ref{lem:initalconverge} is related to the arguments in \cite[p.\ 335]{FS06}. In the next assumption we draw upon one of the building blocks of the theory of viscosity solutions, namely the extension of classical comparison principles to spaces of semi-continuous functions, cf.~\cite[Sec.~5]{CIL92} and \cite[p.~219]{FS06}.

\begin{assumption} \label{ass:comp}
Let $\overline{v}$ be a lower semi-continuous supersolution with $\overline{v}|_{[0,T] \times \pO} = 0$ and $\overline{v}(T,\cdot) = v_T$.  Similarly, let $\underline{v}$ be an upper semi-continuous subsolution with $\underline{v}|_{[0,T] \times \O} = 0$ and $\underline{v}(T,\cdot) = v_T$. Then $\underline{v} \le \overline{v}$.
\end{assumption}

Let $t = \theta s_i^k + (1 - \theta) s_i^{k+1} \in [s_i^k, s_i^{k+1}]$ lie between two time steps, $\theta \in [0,1]$. Then we interpret $v_i(t, \cdot)$ as the linear interpolant between $v_i(s_i^k, \cdot)$ and $v_i(s_i^{k+1}, \cdot)$:
\begin{align} \label{eq:affine}
v_i(t, \cdot) = \theta v_i(s_i^k, \cdot) + (1 - \theta) v_i(s_i^{k+1}, \cdot).
\end{align}
\begin{theorem} \label{thm:uniform}
One has $v_* = v^* = v$, where \(v\) is the unique viscosity solution of equation \eqref{HJB} with \(v(T,\cdot)=v_T\) and \(\eval{v}{[0,T]\times\pO}=0\).  Furthermore
\begin{align} \label{conv}
\lim_{i \to \infty} \| v_i - v \|_{L^\infty((0,T) \times \O)} = 0.
\end{align}
\end{theorem}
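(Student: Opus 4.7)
The plan is to first collapse $v_*$ and $v^*$ to a common continuous viscosity solution via the comparison principle, and then to deduce uniform convergence from this equality by a compactness argument that exploits the piecewise affine structure of $v_i$ in time and space.

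\emph{Step 1 (identification as the viscosity solution).} The previous theorem gives that $v^*$ is a viscosity subsolution and $v_*$ is a viscosity supersolution of \eqref{HJB}. Lemma \ref{lem:initalconverge} supplies the terminal condition $v^*(T,\cdot) = v_*(T,\cdot) = v_T$. The lateral boundary condition $v^*|_{[0,T]\times\pO} = v_*|_{[0,T]\times\pO} = 0$ follows from Assumption \ref{ass:boundaryconv} together with the sandwich $0 \le v_i \le v_i^\a$ from Theorem \ref{thm:discretewellposedness}. Applying Assumption \ref{ass:comp} with $\underline v = v^*$ and $\overline v = v_*$ yields $v^* \le v_*$, while $v_* \le v^*$ holds by definition. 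Setting $v := v_* = v^*$, this function is simultaneously upper and lower semi-continuous, hence continuous, and it is a viscosity solution of \eqref{HJB} with the prescribed data. Uniqueness is an immediate second application of Assumption \ref{ass:comp}.

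\emph{Step 2 (uniform convergence).} I argue by contradiction: if \eqref{conv} fails, then along a subsequence there exist $\eps>0$ and points $(t_i,x_i) \in [0,T] \times \oO$ with $|v_i(t_i,x_i)-v(t_i,x_i)| \ge \eps$, and by compactness I may further assume $(t_i,x_i)\to(t^\star,x^\star)$. The key observation is that, by the affine-in-time extension \eqref{eq:affine} and the $P1$ structure in space, $v_i(t_i,x_i)$ is a convex combination of at most $2(d+1)$ nodal values $v_i(s_i^{k_j},y_i^{\ell_j})$ at the vertices of the space-time cell containing $(t_i,x_i)$; shape regularity combined with $(\Delta x)_i, h_i \to 0$ ensures each such node converges to $(t^\star,x^\star)$. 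Since $\liminf$ and $\limsup$ interact favourably with minima and maxima over a fixed finite set of indices, the definitions of $v_*$ and $v^*$ yield
\[
v(t^\star,x^\star) = v_*(t^\star,x^\star) \le \liminf_{i\to\infty} v_i(t_i,x_i) \le \limsup_{i\to\infty} v_i(t_i,x_i) \le v^*(t^\star,x^\star) = v(t^\star,x^\star),
\]
so $v_i(t_i,x_i)\to v(t^\star,x^\star)$. Continuity of $v$ gives $v(t_i,x_i)\to v(t^\star,x^\star)$, contradicting $|v_i(t_i,x_i)-v(t_i,x_i)|\ge\eps$.

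\emph{Main obstacle.} The only delicate point is the transfer from nodal convergence (built into the definitions of $v_*, v^*$) to convergence at arbitrary space-time points; this is precisely where the affine interpolation in time and the shape regularity of the meshes are used to sandwich non-nodal values by nodal ones whose indices accumulate at the same limit. All remaining ingredients are direct invocations of results already established in the paper.
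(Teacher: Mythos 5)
Your proof is correct and follows essentially the same route as the paper's: identify $v_* = v^* = v$ by the comparison principle with the boundary and terminal data supplied by Assumption~\ref{ass:boundaryconv}, the sandwich $0 \le v_i \le v_i^\a$ and Lemma~\ref{lem:initalconverge}, and then transfer the nodal convergence built into $v_*, v^*$ to arbitrary space-time points using the piecewise affine structure of $v_i$ together with a compactness argument. The only cosmetic difference is that the paper selects a single corner $(s_i^k,y_i^\ell)$ of the enclosing space-time slab whose nodal value deviates from $v(t_i,x_i)$ by at least as much as $v_i(t_i,x_i)$ does, rather than bounding $v_i(t_i,x_i)$ between the minimum and maximum of the corner values; the underlying idea is identical.
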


\begin{proof}
The previous assumption implies that $v_* \ge v^*$ thus \(v^*=v_*=v\). Select for each $i \in \N$ a point $(t_i, x_i) \in [0,T] \times \oO$ such that
\[
\| v_i - v \|_{L^\infty((0,T)\times\O)} = |v_i - v|(t_i, x_i).
\]
Such $(t_i, x_i)$ exist as $v_i - v$ is a continuous function on a compact domain. Let $x_i$ belong to (the closure of) the element $T$ of the finite element mesh and $t \in [s_i^\kappa, s_i^{\kappa+1}]$; then $v_i(t_i, x_i)$ is a weighted average of the values of $v_i$ at the corners of the slab $[s_i^\kappa, s_i^{\kappa+1}] \times \overline{T}$. Thus there is a corner $(s_i^k, y_i^\ell)$ of the slab such that
\[
\| v_i - v \|_{L^\infty((0,T)\times\O)} \le |v_i(s_i^k, y_i^\ell) - v(t_i, x_i)|.
\]
If \eqref{conv} was wrong we could select a subsequence and an $\eps > 0$ such that
\[
\liminf_{j \to \infty} \, \bigl|v_{i(j)}(s_{i(j)}^k, y_{i(j)}^\ell) -
v(t_{i(j)}, x_{i(j)}) \bigr| \ge \eps.
\]
By possibly passing to a further subsequence we may assume that $\{ (t_{i(j)}, x_{i(j)}) \}_j$ converges to an $(t,x) \in [0,T] \times \oO$.  However, this contradicts
\[
v(t,x) = v_*(t,x) \le \liminf_{j \to \infty} v_{i(j)}(s_{i(j)}^k, y_{i(j)}^\ell)
\le \limsup_{i \to \infty} v_{i(j)}(s_{i(j)}^k, y_{i(j)}^\ell) \le v^*(t,x) =
v(t,x).
\]
Thus \eqref{conv} holds. \end{proof}

%%%%%%%%%%%%%%%%%%%%%%%%%%%%%%%%%%%%%%%%%%%%%%%%%%%%%%%%%%%%%%%%%%%%%%%%
\section{Gradient convergence}
%%%%%%%%%%%%%%%%%%%%%%%%%%%%%%%%%%%%%%%%%%%%%%%%%%%%%%%%%%%%%%%%%%%%%%%%

For shorthand, let $W=W^{1,\infty}((0,T)\times\O)$. It is convenient to introduce the discrete spaces
\[
W_i := \{ v \in C([0,T], V_i^0) : v|_{[\ik s, \iko s] \times \O} \text{ is affine in time} \},
\]
which means that functions in $W_i$ have between two time-steps the form of \eqref{eq:affine}. Observe that $W_i \subset W$ for all $i \in \N$.

Fix an arbitrary $\alpha \in A$. It is convenient to view $\sia E$ and $\sia I$ as bilinear forms on $H^1(\O) \times V_i$. Functions $u \in V_i$ have the nodal representation
\[
u(y) = \sum_{\ell} u(\il y) \, \il \phi(y).
\]
To test with functions other than $\hil \phi$ we introduce the following bilinear form as a partially discrete pivot: for $w \in H^1(\O)$ and $u \in V_i$
\[
\llangle \sia E w, u \rrangle := \sum_\ell u(\il y) \bigl( \bia  a(\il y) \langle \nabla w, \nabla \il \phi \rangle + \langle \bia  b \cdot \nabla w + \bia  c \, w, \il \phi \rangle \bigr).
\]
We use corresponding interpretation for $\llangle \sia I w, u \rrangle$ and also
\[
\llangle w, u \rrangle = \llangle \Id w, u \rrangle = \sum_\ell w(\il y) \, u(\il y)\| \il \phi \|_{L^1(\O)} \qquad \text{and} \qquad \llangle \sia C, u \rrangle = \sum_\ell u(\il y) \, \langle d_i^\alpha, \il \phi \rangle = \langle d_i^\alpha, u \rangle.
\]
Assume that for the chosen $\alpha$:
\begin{align} \nonumber | w |_{L^2([0,T], H^1(\O))}^2
\lesssim \, & \sum_{k = 0}^{(T / h_i) - 1} \Bigl( \bllangle \bigl(h_i \sia E - \Id \bigr) w(\iko s,\cdot) + \bigl(h_i \sia I + \Id \bigr) w(\ik s,\cdot), w(\ik s,\cdot) \brrangle \Bigr) + \tst \oh \llangle w(T,\cdot), w(T,\cdot) \rrangle + \| w(T, \cdot) \|_{H^1(\O)}^2\\ \label{eq:posdef}
\stackrel{(*)}{=} \, & \sum_{k = 0}^{(T / h_i) - 1} \Bigl( h_i \bllangle \sia E w(\iko s,\cdot) + \sia I w(\ik s,\cdot), w(\ik s,\cdot) \brrangle \tst + \oh \llangle w(\iko s,\cdot) - w(\ik s,\cdot), w(\iko s,\cdot) - w(\ik s,\cdot) \rrangle \Bigr)\\
& + \tst \oh \llangle w(0,\cdot), w(0,\cdot) \rrangle + \| w(T, \cdot) \|_{H^1(\O)}^2 \nonumber
\end{align}
for all $w \in W_i$ with $w \geq 0$ and $i \in \N$, where $(*)$ is a simple reformulation in terms of a telescope sum.

Due to the definition of the numerical method and the non-negativity of the $v_i$,
\begin{align*}
| v_i |_{L^2([0,T], H^1(\O))}^2 \lesssim \, & \sum_{k = 0}^{(T / h_i) - 1} \Bigl( \bllangle \bigl(h_i \sia E - \Id \bigr) v_i(\iko s,\cdot) + \bigl(h_i \sia I + \Id \bigr) v_i(\ik s,\cdot), v_i(\ik s,\cdot) \brrangle \Bigr) + \tst \oh \llangle v_i(T,\cdot), v_i(T,\cdot) \rrangle + \| v_i(T, \cdot) \|_{H^1(\O)}^2\\
\le \, & \sum_{k = 0}^{(T / h_i) - 1} \bllangle h_i \sia C, v_i(\ik s,\cdot) \brrangle + \tst \oh \llangle v_i(T,\cdot), v_i(T,\cdot) \rrangle  + \| v_i(T, \cdot) \|_{H^1(\O)}^2\\
\lesssim \; & T \, \| d_i^\alpha \|_{L^1(\O)} \, \| v_i \|_{L^\infty([0,T] \times \O)} + \| v_i(T,\cdot) \|_{H^1(\O)}^2.
\end{align*}
Thus, with the $L^\infty$ control established in the previous section, it is apparent that the $v_i$ are bounded in $L^2([0,T], H^1(\O))$ provided that $v_i(T,\cdot)=\calI_iv_T$ are bounded in $H^1(\O)$; this condition holds if $v(T,\cdot) \in W^{1,\infty}(\O)$. The first convergence result for the gradient is therefore that, owing to the Banach-Alaoglu theorem, $v_i \wc v$ weakly in $L^2([0,T], H^1(\O))$, using $L^\infty((0,T) \times \O)$ convergence to pass from $L^2([0,T], H^1(\O))$ weak convergence of subsequences to $L^2([0,T], H^1(\O))$ weak convergence of the whole sequence.

The question arises under which circumstances the convergence in the gradient is also strong. We demonstrate this under the below Assumption \ref{ass:grad_reg}. We note that supposing \eqref{eq:posdef} points towards uniform ellipticity of $L^\alpha$. Let $\Lambda_0$ be the level set $\{ (t,x) \in (0,T) \times \O : v(t,x) = 0 \}$. For a smooth $v$ the boundary of $\Lambda_0$ is always a $d-1$ dimensional set if $0$ is a regular value.

\begin{assumption} \label{ass:grad_reg}
The value function $v$ belongs to the space $W =  W^{1,\infty}( (0,T) \times \O)$ and the $d$-dimensional Lebesgue measure of the boundary of \,$\Lambda_0$ vanishes: ${\rm vol} (\partial \Lambda_0) = 0$. The coefficients $\bia a$ and $\bbia a$ belong to $W^{1,\infty}(\O)$ and \eqref{eq:posdef} is satisfied.
\end{assumption}

Let us suppose momentarily that there are approximations $Q_i v \in W_i$ to $v$ such that
$Q_i v \le v_i$ for all $i \in \N$ and
\[
\lim_{i \to \infty} \| v - Q_i v \|_{L^2([0,T], H^1(\O))} = 0,
\]
and
\begin{align} \label{eq:projcond}
\lim_{i \to \infty} \sum_{k = 0}^{(T / h_i) - 1} \bllangle \bigl(h_i \sia E - \Id \bigr) Q_i v(\iko s,\cdot) + \bigl(h_i \sia I + \Id \bigr) Q_i v(\ik s,\cdot), (v_i - Q_i v)(\ik s, \cdot) \brrangle \to 0.
\end{align}
We will construct such $Q_i v$ below. With $\xi^k = v_i(\ik s,\cdot) - Q_i v(\ik s,\cdot)$,
\begin{align} \nonumber
& | v_i - Q_i v |_{L^2([0,T], H^1(\O))}^2 \lesssim \sum_{k = 0}^{(T / h_i) - 1} \bllangle \bigl(h_i \sia E - \Id \bigr) \xi^{k+1} + \bigl(h_i \sia I + \Id \bigr) \xi^k, \xi^k \brrangle\\ \nonumber
= \, & \sum_{k = 0}^{(T / h_i) - 1} \bllangle \bigl(h_i \sia E - \Id \bigr) v_i(\iko s,\cdot) + \bigl(h_i \sia I + \Id \bigr) v_i(\ik s,\cdot), \xi^k \brrangle -
\sum_{k = 0}^{(T / h_i) - 1} \bllangle \bigl(h_i \sia E - \Id \bigr) Q_i v(\iko s,\cdot) + \bigl(h_i \sia I + \Id \bigr) Q_i v(\ik s,\cdot), \xi^k \brrangle\\
\stackrel{(*)}{\le} \, & \sum_{k = 0}^{(T / h_i) - 1} \bllangle h_i \sia C, \xi^k \brrangle \; -  \label{eq:Hone_split}
\sum_{k = 0}^{(T / h_i) - 1} \bllangle \bigl(h_i \sia E - \Id \bigr) Q_i v(\iko s,\cdot) + \bigl(h_i \sia I + \Id \bigr) Q_i v(\ik s,\cdot), \xi^k \brrangle,
\end{align}
using in $(*)$ the numerical scheme, $\xi^{T/h_i} = 0$ and that, due to the assumptions on the $Q_i$, the sign of $v_i - Q_i v$ is known. Since
\begin{align*}
\sum_{k = 0}^{(T / h_i) - 1} \bllangle h_i \sia C, \xi^k \brrangle \le \, & \| d_i^\alpha \|_{L^2(\O)} \, \sum_{k = 0}^{(T / h_i) - 1} h_i \bigl (\| v_i(\ik s, \cdot) - v(\ik s, \cdot) \|_{L^2(\O)} + \| v(\ik s, \cdot) - Q_i v(\ik s, \cdot) \|_{L^2(\O)} \bigr)\\
\lesssim \, & \| d_i^\alpha \|_{L^2(\O)} \, \bigl( \| v_i - v \|_{L^2((0,T) \times \O)} + \| v - Q_i v \|_{L^2((0,T) \times \O)} \bigr),
\end{align*}
the first term in \eqref{eq:Hone_split} vanishes as $i \to \infty$. The second term vanishes due to \eqref{eq:projcond}. Hence $| v_i - v |_{L^2([0,T], H^1(\O))} \to 0$ as $i \to \infty$.

\begin{theorem}
If there is an $\alpha \in A$ such that Assumption \ref{ass:grad_reg} holds, then the numerical solutions converge to the exact solution strongly in $L^2([0,T], H^1(\O))$.
\end{theorem}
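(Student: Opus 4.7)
My plan is to invoke the derivation \eqref{eq:Hone_split} already laid out in the discussion preceding the theorem. That calculation reduces the claim to constructing $Q_i v \in W_i$ satisfying three properties: (i) the pointwise bound $Q_i v \le v_i$; (ii) the matching condition $Q_i v(T,\cdot)=v_i(T,\cdot)$ together with $Q_i v \to v$ strongly in $L^2([0,T], H^1(\O))$; and (iii) the consistency condition \eqref{eq:projcond}. Once (i)--(iii) are verified, \eqref{eq:Hone_split} together with the $L^\infty$ convergence from Theorem \ref{thm:uniform} yields $\|v_i - Q_i v\|_{L^2([0,T], H^1(\O))} \to 0$, and the triangle inequality then closes the argument.

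I would construct $Q_i v$ by the natural truncation-and-interpolation: set $\delta_i := \|v_i - v\|_{L^\infty((0,T) \times \O)}$, which tends to zero by Theorem \ref{thm:uniform}, and define $Q_i v(T, \cdot) := \calI_i v_T$ and
\[
Q_i v(\ik s, \cdot) := \calI_i \max\bigl(v(\ik s, \cdot) - \delta_i,\, 0\bigr), \quad 0 \le k < T/h_i,
\]
extended affinely in time between time steps. Since $v$ vanishes on $\pO$, so does $\max(v - \delta_i,0)$, so $Q_i v \in W_i$. Property (i) holds at every internal node $\il y$ because $v_i(\ik s,\il y) \ge v(\ik s,\il y) - \delta_i$ and $v_i(\ik s,\il y) \ge 0$ (Theorem \ref{thm:discretewellposedness}) together give $v_i(\ik s,\il y) \ge Q_i v(\ik s,\il y)$, and piecewise-linear interpolation in space together with affinity in time preserve this inequality.

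For property (ii), the truncations $\psi_i := \max(v - \delta_i,0)$ are uniformly Lipschitz with $\nabla \psi_i = \chi_{\{v > \delta_i\}} \nabla v$ almost everywhere. The hypothesis ${\rm vol}(\p\Lambda_0)=0$, combined with the fact that $\nabla v = 0$ a.e.\ on the interior of $\Lambda_0$, gives $\nabla \psi_i \to \nabla v$ a.e.; dominated convergence then yields $\psi_i \to v$ in $L^2([0,T], H^1(\O))$. Standard nodal-interpolation estimates for uniformly Lipschitz functions on shape-regular meshes complete the passage $\calI_i \psi_i \to v$ in the same norm. The modification at the final time slab is supported on a time-interval of length $h_i$ and is therefore negligible.

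The hard part will be to verify property (iii). By a telescoping identity analogous to $(*)$ of the preceding derivation, the sum in \eqref{eq:projcond} splits as
\[
\sum_{k=0}^{T/h_i-1} h_i \bllangle \sia E Q_i v(\iko s,\cdot) + \sia I Q_i v(\ik s,\cdot),\, \xi^k \brrangle \;-\; \sum_{k=0}^{T/h_i-1} \bllangle Q_i v(\iko s,\cdot) - Q_i v(\ik s,\cdot),\, \xi^k \brrangle,
\]
where $\xi^k := v_i(\ik s,\cdot) - Q_i v(\ik s,\cdot) \ge 0$ tends to zero in $L^\infty$. The $W^{1,\infty}$-regularity of $\bia a, \bbia a$ from Assumption \ref{ass:grad_reg} allows one to control the mass-lumped bilinear form by $|\llangle \sia E w,u \rrangle| \le C \|w\|_{H^1(\O)} \|u\|_{L^2(\O)}$ up to errors vanishing with $(\Delta x)_i$; combined with the uniform $L^2([0,T],H^1(\O))$-boundedness of $Q_i v$ from (ii) and $\|\xi\|_{L^2((0,T)\times\O)} \to 0$, this shows the elliptic contribution vanishes. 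The time-derivative term is handled similarly: $d_i Q_i v$ is uniformly bounded in $L^2((0,T) \times \O)$ since $\p_t v \in L^\infty$, and pairs with $\xi \to 0$ in $L^2$ to produce a vanishing contribution. The main subtlety throughout is the reduction of the discrete consistency on $\llangle \sia E \cdot,\cdot \rrangle$ to an $L^2$-type duality estimate, which is precisely what the $W^{1,\infty}$-regularity of the diffusion coefficients in Assumption \ref{ass:grad_reg} is designed to provide.
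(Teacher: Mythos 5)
Your construction of $Q_i v$ by nodal interpolation of a truncated shift of $v$ is exactly the one in the paper, and your dominated-convergence argument for $Q_i v \to v$ in $L^2([0,T],H^1(\O))$ is a reasonable, slightly more direct alternative to the paper's exhaustion by the sets $\Gamma_i'$ (both ultimately rest on ${\rm vol}(\partial\Lambda_0)=0$ and on $\m I_i w \to w$ in $H^1$ for Lipschitz $w$). The genuine gap is in your verification of property (iii), which you yourself flag as the hard part. You invoke a duality bound $\smabs{\llangle \sia E w,\, u\rrangle} \le C\smnorm{w}_{H^1(\O)}\smnorm{u}_{L^2(\O)}$ up to vanishing errors, but this fails for the diffusion contribution: $\sum_\ell u(\il y)\,\bia a(\il y)\langle \nabla w,\nabla\il\phi\rangle = \langle \nabla w,\, \nabla\m I_i(\bia a\, u)\rangle$ is a gradient-against-gradient pairing, and with $w,u\in V_i$ the only available bounds are of the form $C\smnorm{w}_{H^1}\smnorm{u}_{H^1}$ or, via an inverse estimate, $C(\Delta x)_i^{-1}\smnorm{w}_{H^1}\smnorm{u}_{L^2}$. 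Neither closes the argument: $\smnorm{\xi^k}_{H^1}$ is precisely the quantity being controlled, and no rate is available for $\smnorm{\xi^k}_{L^2}$ to beat the $(\Delta x)_i^{-1}$. So the elliptic part of \eqref{eq:projcond} cannot be dispatched by $H^1\times L^2$ continuity, and the $W^{1,\infty}$ regularity of $\bia a,\bbia a$ does not by itself supply such an estimate.

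What is actually needed here is the weak--strong pairing argument. After rewriting the implicit diffusion term as $\int_0^T \langle \m J_i\nabla Q_i v,\; \m J_i\nabla\m I_i(\bbia a(v_i - Q_i v))\rangle\,\d t$ (and similarly for the explicit one), one shows that $\m J_i\nabla\m I_i(\bbia a(v_i - Q_i v))$ is merely \emph{bounded} in $L^2((0,T)\times\O;\R^d)$ (using an inverse estimate together with the $W^{1,\infty}$ bound on $\bbia a$) but converges \emph{weakly} to zero (by testing against $C^1$ vector fields, integrating by parts, and using the uniform convergence of $v_i - Q_i v$ to zero), whereas $\m J_i\nabla Q_i v$ converges \emph{strongly} in $L^2((0,T)\times\O;\R^d)$ by part (ii). The integral of a weakly convergent against a strongly convergent sequence then converges, which is how the second-order contribution to \eqref{eq:projcond} is killed. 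Your time-derivative and lower-order estimates are fine, but without this weak--strong mechanism (or an equivalent one) the proof of (iii), and hence of the theorem, is incomplete.
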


\begin{proof}
It remains to show that suitable $Q_i$ can be constructed, given Assumption \ref{ass:grad_reg}. Denoting the nodal interpolant on $[0,T] \times \oO$ by $\m I_i$ we define
\begin{align} \label{eq:defQ}
Q_i : \; W \to W_i, \; w \mapsto \m I_i \max \{ w - \| v - v_i \|_{L^\infty((0,T) \times \O)} , 0 \}.
\end{align}
Observe that the $\max$ operator in \eqref{eq:defQ} switches between the first and second argument in the vicinity of $\partial \Lambda_0$ for $i$ sufficiently large. Furthermore, $Q_i v \in W_i$ satisfies homogeneous boundary conditions and $Q_i v \le v_i$ and, by the mean value theorem,
\[
\| Q_i v \|_{W^{1,\infty}((0,T) \times \O)} \le \| v \|_{W^{1,\infty}((0,T) \times \O)}.
\]
Note also that for all nodes $\il y$ and time levels $\ik s$
\[
 0 \leq \left(v_i-Q_i v\right)(\ik s,\il y) = \min\bigl\{ \left(v_i-v\right)(\ik s,\il y)+\norm{v_i-v}_{L^{\infty}((0,T)\times\O)}, v_i(\ik s,\il y)\bigr\} \leq 2 \norm{v_i-v}_{L^{\infty}((0,T)\times\O)}.
\]
Consider the set $\Gamma_i$ of points which is not `affected by the cut-off below $0$' in \eqref{eq:defQ} in the sense that
\[
\Gamma_i := \bigl\{ (t,x) \in (0,T) \times \O : \inf_{j \ge i} Q_j v(t,x) > 0 \; \text{ or } (t,x) \; \in \Lambda_0 \bigr\}.
\]
The set $\Gamma_i'$ contains the points which are at least one element's length away from the boundary of $\Gamma_i \setminus \partial \Lambda_0$:
\[
\Gamma_i' := \bigl\{ (t,x) \in \Gamma_i : \{ (s,y) \in (0,T) \times \O : \| (t,x) - (s,y) \| < \sup_{j \ge i} h_j + (\Delta x)_j \}\subset \Gamma_i \setminus \partial \Lambda_0 \bigr\}.
\]
Notice that $\Gamma_i$ and $\Gamma_i'$ are hierarchical families. Since $\| v - v_i \|_{L^\infty((0,T) \times \O)} \to 0$ and $h_i + (\Delta x)_i \to 0$ as $i \to \infty$ it follows that
\[
\bigcup_{i \in \N} \Gamma_i' = \bigl( (0,T) \times \O \bigr) \setminus \partial \Lambda_0.
\]
Crucially, $(\partial_t Q_j v)|_{\Gamma_i'} = (\partial_t \m I_j v)|_{\Gamma_i'}$ and $(\nabla Q_j v)|_{\Gamma_i'} = (\nabla \m I_j v)|_{\Gamma_i'}$ for $j \ge i$.

For every $\eps > 0$ there are $i,j \in \N$ such that ${\rm vol}(\O \setminus \Gamma_i') \le \eps^2$ and $\| Q_k v - v \|_{H^1(\Gamma_i')} \le \eps$ for all $k \ge j$. Therefore
\[
\| Q_k v - v \|_{H^1((0,T) \times \O)} \lesssim \| Q_k v - v \|_{H^1(\Gamma_i')} + \sqrt{{\rm vol}(\O \setminus \Gamma_i')} \, \| v \|_{W^{1,\infty}((0,T) \times \O)} \le \eps (1 + \| v \|_{W^{1,\infty}((0,T) \times \O)}),
\]
giving strong convergence in $H^1((0,T) \times \O)$, meaning convergence in the spatial gradient and the time derivative. The terms connected to the time derivative in \eqref{eq:projcond} vanish in the limit as
\[
\sum_{k = 0}^{(T / h_i) - 1} \bllangle Q_i v(\iko s,\cdot) - Q_i v(\ik s,\cdot), \xi^k \brrangle = \sum_{k = 0}^{(T / h_i) - 1} h_i \bllangle (\partial_t Q_i v)|_{(\ik s,\iko s)}, \xi^k \brrangle \lesssim \| \partial_t v \|_{L^2((0,T) \times \Omega)} \, \| \xi^k \|_{L^2((0,T) \times \Omega)}.
\]
Recall that 
\begin{align*}
\llangle \sia I Q_i v(\ik s, \cdot), \xi^k \rrangle & = \sum_\ell (v_i - Q_i v)(\ik s, \il y) \bigl( \bbia  a(\il y) \langle \nabla Q_i v(\ik s, \cdot), \nabla \il \phi \rangle + \langle \bbia  b \cdot \nabla Q_i v(\ik s, \cdot) + \bbia  c \, Q_i v(\ik s, \cdot), \il \phi \rangle \bigr).
\end{align*}
The lower-order terms vanish due to the uniform convergence of $ v_i -Q_i v$ to $0$ and the bound
\[
\sup_i \| \bbia  b \cdot \nabla Q_i v(\ik s, \cdot) + \bbia  c \, Q_i v(\ik s, \cdot) \|_{L^\infty(\O)} < \infty.
\]
We note for the second-order term that
\begin{align*}
& \sum_\ell (v_i - Q_i v)(\ik s, \il y) \bbia  a(\il y) \langle \nabla Q_i v(\ik s, \cdot), \nabla \il \phi \rangle = \, \langle \nabla Q_i v(\ik s, \cdot), \nabla \m I_i (\bbia a(v_i - Q_i v))(\ik s, \cdot) \rangle,
\end{align*}
so that in \eqref{eq:projcond} the implicit part of the second-order term becomes
\begin{align} \label{eq:implicit}
& \sum_{k = 0}^{(T / h_i) - 1} \!\!\! h_i \, \langle \nabla Q_i v(\ik s, \cdot), \nabla \m I_i (\bbia a(v_i - Q_i v))(\ik s, \cdot) \rangle
= \, \int_0^T \! \langle \m J_i \nabla Q_i v, \m J_i\nabla \m I_i (\bbia a(v_i - Q_i v))\rangle \, \d t,
\end{align}
where $\m J_i$ maps any $w : [0,T] \to L^2(\O;\R^d)$ onto the step function with $(\m J_i w)|_{[\ik s, \iko s)} \equiv w(\ik s, \cdot)$. Note that $\m J_i \nabla Q_i v$ converges strongly in $L^2((0,T) \times \O; \R^d)$. At a time $\ik s \in [0,T)$ the bound
\begin{align*}
\| \nabla \m I_i (\bbia a(v_i - Q_i v)) \|_{L^2(\O; \R^d)} \lesssim \, & \| \nabla \m I_i (\bbia a v_i) \|_{L^2(\O; \R^d)} + \| \bbia a Q_i v \|_{W^{1,\infty}(\O)} \lesssim \| \bbia a \|_{W^{1,\infty}(\O)} \cdot \left( \| v_i \|_{H^1(\O)} + \|  v \|_{W^{1,\infty}(\O)}\right)
\end{align*}
follows from an inverse estimate and
\[
\sum_T \| \nabla \m I_i (\bbia a v_i) \|_{L^2(T; \R^d)}^2 \lesssim \sum_T (\Delta x)_T^d \, \| \nabla \m I_i (\bbia a v_i) \|_{L^\infty(T; \R^d)}^2
\lesssim \sum_T \| \bbia a \|_{W^{1,\infty}(T)}^2 \; \bigl( (\Delta x)_T^d \, \| v_i \|_{W^{1,\infty}(T)}^2 \bigr).
\]
The convergence 
\[
\lim_{i \to \infty} \; \int_0^T \langle w, \m J_i \nabla \m I_i (\bbia a(v_i - Q_i v))\rangle \d t = - \lim_{i \to \infty} \; \int_0^T \langle \nabla \cdot w, \m J_I \m I_i (\bbia a(v_i - Q_i v))\rangle \d t = 0
\]
with test functions $w$ in the dense subset $C^1((0,T) \times \O; \R^d)$ gives weak convergence of $\nabla \m I_i (\bbia a(v_i - Q_i v))$ in $L^2((0,T) \times \O; \R^d)$, see \cite[p.~121]{Yoshida}. Combing weak and strong convergence \cite[Prop.~21.23]{ZeidlerII}, it is ensured that \eqref{eq:implicit} converges to $0$ as $i \to \infty$. A similar argument guarantees that $\sum_k h_i \, \llangle \sia E Q_i v(\iko s, \cdot), \xi^k \rrangle$ vanishes in the limit.
\end{proof}

The regularity of the exact value function $v$ is, for instance, discussed in Section IV.8 and IV.9 of \cite{FS06}. Another item of Assumption \ref{ass:grad_reg}, namely the justification of \eqref{eq:posdef}, is examined in the following example:

\begin{example}
a) Suppose that $a^\alpha$ is positive and constant and, for all smooth $w$,
\[
L^\alpha w = I^\alpha w = - a^\alpha \Delta w + b^\alpha \cdot \nabla w + c^\alpha w, \qquad E^\alpha w = 0, 
\]
and, to obtain semi-definiteness in the lower-order terms, $\tst c^{\a} - \oh \nabla \cdot b^{\a} \ge 0$. Then, for $w \in W_i$,
\begin{align*} \nonumber
a^\alpha | w |_{L^2([0,T], H^1(\O))}^2
\lesssim \; & a^\alpha \! \sum_{k = 0}^{(T / h_i) - 1} \! h_i \bigl\langle \nabla w(\ik s,\cdot), \nabla w(\ik s,\cdot) \bigr\rangle + \| w(T,\cdot) \|_{H^1(\O)}^2 = \sum_{k = 0}^{(T / h_i) - 1} \!\! h_i \bllangle \sia I w(\ik s,\cdot), w(\ik s,\cdot) \brrangle + \| w(T,\cdot) \|_{H^1(\O)}^2.
\end{align*}
b) Suppose that $a^{\a}\in W^{2,\infty}(\O)$ is non-constant, positive, uniformly bounded from below and that $c^{\a}-\half (\nabla \cdot b^{\a}+\Delta a^{\a}) \geq 0$, noting for smooth $w$:
\[
\langle L^\alpha w, w \rangle = \tst \langle a^\alpha \nabla w, \nabla w \rangle + \langle (c^{\a}-\half (\nabla \cdot b^{\a}+\Delta a^{\a})) w, w \rangle.
\]
Again choosing a fully implicit scheme with $L^\alpha = I^\alpha$, the highest order term in $\llangle \sia I w, w \rrangle$ is at time $\ik s$:
 \[
\sum_{\ell} w(\ik s, \il y) a^{\a}(\ik s, \il y) \pair{\nabla w(\ik s, \cdot)}{\nabla \il \phi}=\pair{\nabla w(\ik s, \cdot)}{\nabla \calI_i (a^{\a}(\ik s, \cdot) w(\ik s, \cdot))}.
\]
According to Theorem 2.1 in \cite{Demlow11} there is a constant $C = C\bigl(\| a^\a \|_{W^{2,\infty}(\O)}\bigr)$ such that for $i$ sufficiently large
 \[
\pair{\nabla w}{\nabla \calI_i (a^{\a} w)} - \pair{\nabla w}{\nabla a^{\a} w} \le \| \nabla w \|_{L^2(\O; \R^d)} \cdot \| \calI_i (a^{\a} w) - a^{\a} w \|_{H^1(\O)} \le C \; (\Delta x)_i \, \| w \|_{H^1(\O)}^2,
 \]
using that the $\eta$ appearing in the proof in \cite{Demlow11} is defined in terms of nodal interpolation. It then follows from Poincar\'{e}'s inequality that there is some $C$ such that for $C (\Delta x)_i < \oh \inf_{\O} a^{\a}$ that $\abs{w}^2_{H^1(\O)} \lesssim \llangle \sia I w, w \rrangle$ for $w \in V_i^0$, implying \eqref{eq:posdef}.
\end{example}

%%%%%%%%%%%%%%%%%%%%%%%%%%%%%%%%%%%%%%%%%%%%%%%%%%%%%%%%%%%%%%%%%%%%%%%%
\section{Example: the method of artificial diffusion}
%%%%%%%%%%%%%%%%%%%%%%%%%%%%%%%%%%%%%%%%%%%%%%%%%%%%%%%%%%%%%%%%%%%%%%%%

The purpose of this section is to provide a way of constructing the operators \(\sia E\) and \(\sia I\) in order to satisfy Assumptions~\ref{consistency} and \ref{monotonicity}. This approach, called the method of artificial diffusion, is based on the fact that for strictly acute meshes, the discrete Laplacian is monotone. Further details on the method of artificial diffusion and monotone finite element schemes may, for example, be found in \cite{BE02}, \cite{Codina}  and \cite{XZ99}.

Let \(\m T_i\) be the mesh corresponding to the finite element space \(V_i\). Given a function $f : \Omega \to \R^d$ we denote
\[
|f|_T:= \Bigl( \sum_{j=1}^{d} \bigl\| f_j \bigr\|_{L^\infty(T)}^2 \Bigr)^{\half}, \qquad T \in \m T_i,\;i\in\N.
\]
If $f$ is elementwise constant then $|f|_T$ is simply the Euclidean norm of $f$ on $T$. Let \((\Delta x)_T\) denote the diameter of $T$. We assume that the meshes \(\calT_i\) are strictly acute \cite{BE02} in the sense that there exists \(\theta \in (0,\pi/2)\) such that
\Eb\label{eq:acutemesh}
\nabla \il \phi \cdot \nabla \phi_{i}^{l} \bigl|_T \leq - \, \sin(\theta) \; | \nabla \il \phi |_T \; | \nabla \phi^{l}_i |_T \qquad \forall \ell,l \leq N \; \forall i \in \N.
\Ee
We choose a splitting of the form $a^{\a} = \tia a + \ttia a$, $b^{\a} = \bia b + \bbia b$, $c^{\a} = \bia c + \bbia c$ and $d^{\a}=d_i^{\a}$, where all terms are in \(C(\oO)\), \(\tia a\) and \(\ttia a\) are non-negative and all \( \bia c\) and \( \bbia c\) are non-negative and satisfy inequality \eqref{react}. Choose non-negative \( \bial \nu \) and \( \bbial \nu \) such that for all $T$ which have $\il y$ as vertex:
\begin{subequations}\label{ineq:nut}
\begin{align}
\bigl( |  \bia b |_T \, +  (\Delta x)_T \|  \bia c \|_{L^\infty(T)} \bigr) \le \, &  \bial \nu \, \sin(\theta) \, | \nabla \hil \phi |_T \, {\rm vol}(T), \\
\bigl( | \bbia b |_T \, +  (\Delta x)_T \| \bbia c \|_{L^\infty(T)} \bigr) \le \, & \bbial \nu \, \sin(\theta) \, | \nabla \hil \phi |_T \, {\rm vol}(T).
\end{align}
\end{subequations}
Choose \( \bia a \) and \(\bbia a \) both in \(C(\oO)\) such that $\bia a(\il y) \geq \max \bigl\{ \tia a(\il y), \bial \nu \bigr\}$ and $\bbia a(\il y) \geq \max \bigl\{ \ttia a(\il y), \bbial \nu \bigr\}$. Now suppose that \( w \in V_i\) has a non-positive minimum at an interior node \( \il y\). By extending the arguments of \cite{BE02}, we show that
\begin{align} \label{av_mono}
(\sia E w)_\ell \le 0, \qquad (\sia I w)_\ell \le 0.
\end{align}
We illustrate the proof of \eqref{av_mono} for the implicit term. From the strict acuteness condition on the mesh, it can be shown that on the restriction to $T$ \cite[Lemma 3.1]{BE02}
\eb
\nabla w \cdot \nabla \il \phi = \cos \bigl( \angle (\nabla w, \nabla \il \phi) \bigr) \, | \nabla w |_T \, | \nabla \il \phi |_T \leq - \sin(\theta) | \nabla w |_T \, | \nabla \il \phi |_T.
\ee
Using $\bbia c \ge 0$, $w(\il y) \le 0$ and $\| \hil \phi \|_{L^1(\O)}=1$,
\begin{align*}
\langle \bbia c \, w, \hil \phi \rangle = \, & \int_\O \bbia c(x) \, \bigl( w(\il y) + \nabla w(x) \cdot (x - \il y) \bigr) \, \hil \phi(x) \, \d x\\
\le \, & \int_\O \bbia c(x) \, \nabla w(x) \cdot (x - \il y) \, \hil \phi(x) \, \d x \le \sum_T  \, \| \bbia c \|_{L^\infty(T)} \, |\nabla w|_T \, (\Delta x)_T.
\end{align*}
Consequently,
\begin{align*}
(\sia I w)_\ell = \, & \bbia a(\il y) \langle \nabla w, \nabla \hil \phi \rangle + \langle \bbia b \cdot \nabla w + \bbia c \, w, \hil \phi \rangle\\
\le \, & \sum_T -  \bbia a(\il y) \sin(\theta) | \nabla w |_T \, | \nabla \hil \phi |_T \, {\rm vol}(T) + |\bbia b |_T \, | \nabla w |_T + \| \bbia c \|_{L^\infty(T)} \, |\nabla w|_T \, (\Delta x)_T \\
\le \, &  \sum_T | \nabla w |_T \bigl( \bigl( | \bbia b |_T \, +(\Delta x)_T \| \bbia c \|_{L^\infty(T)} \bigr) - \bbial \nu \, \sin(\theta) \, | \nabla \hil \phi |_T \, {\rm vol}(T) \bigr) \le 0.
\end{align*}
The proof of $(\sia E w)_\ell \le 0$ is analogous. As hat functions $\il \phi$ attain a non-positive minimum at all $y_i^j$ where $j \neq \ell$, all off-diagonal entries of $\sia E$ are non-positive. Hence with a suitable time step restriction the $h_i \sia E - \Id$ are monotone, which ensures that Assumption \ref{monotonicity} is satisfied. 

The scaling of the terms in \eqref{ineq:nut} with respect to $(\Delta x)_T$ leads to Assumption \ref{consistency}. Due to shape-regularity all elements $T$ on a patch are of comparable size; giving $\| \il \phi \|_{L^1(\O)} \leq C \, {\rm vol}(T)$ for all $T\subset \supp \il \phi$ with a constant $C$ which is independent of $h$ and $\ell$. Hence in \eqref{ineq:nut}, we see that
\[
{\rm vol}(T) \; | \nabla \hil \phi |_T \geq \frac{{\rm vol}(T)}{(\Delta x)_T \, \| \il \phi \|_{L^1(\O)}} \geq \frac{1}{\vphantom{(\Delta x)_T \, \| \il \phi \|_{L^1(\O)}} C (\Delta x)_T}.
\]
Thus, if $\bial \nu$ and $\bbial \nu$ are chosen optimally then for $T \subset \supp \il \phi$
\begin{align} \label{eq:artdifforder}
\bial \nu =  \mathsf{O} \big( \sup_{T} \bigl\{ | \bia b|_T (\Delta x)_T + \| \bia c \|_{L^\infty(T)} (\Delta x)_T^2 \bigr\} \big), \qquad
\bbial \nu =  \mathsf{O}\big( \sup_{T} \bigl\{ | \bbia b|_T (\Delta x)_T + \| \bbia c \|_{L^\infty(T)} (\Delta x)_T^2 \bigr\} \big).
\end{align}
With \eqref{eq:artdifforder} in mind we return to the time step restriction for semi-implicit and explicit methods. The non-positivity of the diagonal terms of $h_i \sia E - \Id$ expands to
\begin{align*}
1 \ge \, & h_i \Bigl( \bia  a(\il y) \langle \nabla \il \phi, \nabla \hil \phi \rangle + \langle \bia  b \cdot \nabla \il \phi + \bia  c \, \il \phi, \hil \phi \rangle \Bigr)
= h_i \Bigl( {\mathsf O} \bigl( \bia a \, (\Delta x)_T^{-2} \bigr) + {\mathsf O} \bigl( | \bia b |_T \, (\Delta x)_T^{-1} \bigr) + {\mathsf O} \bigl( \bia c \bigr) \Bigr).
\end{align*}
Therefore the time step restriction imposed by $L^\alpha$ is $h_i \lesssim \sup_T \bigl( (\Delta x)_T^2 / \bia a(\il y)\bigr)$, $\il y \in \overline T$, if there is a non-zero $\tia a$ and $i$ is large. It is $h_i \lesssim \sup_T \bigl( (\Delta x)_T / |\bia b(\il y)|_T \bigr)$ if all $\bia a = 0$, $i \in \N$, and there are non-zero $\bia b$, and is $\mathsf{O}(1)$ if all $\bia a$ and $\bia b$ vanish. There is no restriction if also all $\bia c$ are zero.

%%%%%%%%%%%%%%%%%%%%%%%%%%%%%%%%%%%%%%%%%%%%%%%%%%%%%%%%%%%%%%%%%%%%%%%%

\small

%%%%%%%%%%%%%%%%%%%%%%%%%%%%%%%%%%%%%%%%%%%%%%%%%%%%%%%%%%%%%%%%%%%%%%%%
\end{document}